\newtheorem{lemma}{Lemma}[section]
\newtheorem{theorem}{Theorem}[section]
\newtheorem{definition}{Definition}[section]
\newtheorem{proposition}{Proposition}[section]
\newtheorem{corollary}{Corollary}
\newtheorem{remark}{Remark}
\begin{document}

\title{Limit theorems related to beta-expansion and continued fraction expansion
\footnotetext {* Corresponding author}
\footnotetext {2010 AMS Subject Classifications: 11K50, 11K55, 60F05, 60F15.}}

\author{  Lulu Fang$^\dag$, Min Wu$^\dag$ and Bing Li$^{\dag, *}$\\
\small \it $\dag$ Department of Mathematics\\
\small \it South China University of Technology\\
\small \it Guangzhou 510640, P.R. China\\
\small \it E-mails: fanglulu1230@163.com, wumin@scut.edu.cn and scbingli@scut.edu.cn}
\date{}
\maketitle

\begin{center}
\begin{minipage}{120mm}
{\small {\bf Abstract.}
  Let $\beta > 1$ be a real number and $x \in [0,1)$ be an irrational number. Denote by $k_n(x)$ the exact number of partial quotients in the continued fraction expansion of $x$ given by the first $n$ digits in the $\beta$-expansion of $x$ ($n \in \mathbb{N}$). In this paper, we show a central limit theorem and a law of the iterated logarithm for the random variables sequence $\{k_n, n \geq 1\}$, which generalize the results of Faivre \cite{lesFai98} and Wu \cite{lesWu08} respectively from $\beta =10$ to any $\beta >1$.
}
\end{minipage}
\end{center}

\vskip0.5cm {\small{\bf Key words and phrases} \ Beta-expansion, Continued fractions, Central limit theorem, Law of the iterated logarithm.}\vskip0.5cm

\section{Introduction}
Let $\beta > 1$ be a real number and $T_\beta: [0,1) \longrightarrow [0,1)$ be the \emph{$\beta$-transformation} defined as
\begin{equation*}\label{T-beta}
T_\beta(x)= \beta x - \lfloor\beta x\rfloor,
\end{equation*}
where $\lfloor x\rfloor$ denotes the greatest integer not exceeding $x$. Then every $x \in [0,1)$ can be uniquely expanded into a finite or infinite series, i.e.,
\begin{equation}\label{beta expansion}
x = \frac{\varepsilon_1(x)}{\beta} + \frac{\varepsilon_2(x)}{\beta^2} + \cdots + \frac{\varepsilon_n(x)}{\beta^n} + \cdots,
\end{equation}
where $\varepsilon_1(x) = \lfloor\beta x\rfloor$ and $\varepsilon_{n+1}(x) = \varepsilon_1(T_\beta^nx)$ for all $n \geq 1$. We call the representation (\ref{beta expansion}) the \emph{$\beta$-expansion} of $x$ denoted by $(\varepsilon_1(x),\varepsilon_2(x),\cdots, \varepsilon_n(x),\cdots)$ and $\varepsilon_n(x), n \geq 1$ the \emph{digits} of $x$. Such an expansion was first introduced by R\'{e}nyi \cite{lesRen57}, who proved that there exists a unique $T_\beta$-invariant measure equivalent to the Lebesgue measure $\mathrm{P}$ when $\beta$ is not an integer; while it is known that the Lebesgue measure is $T_\beta$-invariant when $\beta$ is an integer. Furthermore, Gel'fond \cite{lesGel59} and Parry \cite{lesPar60} independently found the density formula for this invariant measure with respect to (w.r.t.) the Lebesgue measure. The arithmetic and metric properties of $\beta$-expansion were studied extensively in the literature, such as \cite{lesBla89, lesD.K02, lesF.W12, lesF.S92, lesHT08, lesL.P.W.W14, lesL.W08, lesSch97, lesSch80} and the references therein.

Now we turn our attention to continued fraction expansions. Let $T: [0,1) \longrightarrow [0,1)$ be the \emph{Gauss transformation} given by
\begin{equation*}\label{Gauss}
Tx =
\begin{cases}
\dfrac{1}{x} - \left\lfloor\dfrac{1}{x}\right\rfloor, &\text{if $x \in (0,1)$}; \\
0 , &\text{if $x=0$}.
\end{cases}
\end{equation*}
Then any real number $x \in [0,1)$ can be written as
\begin{equation}\label{continued fraction expansion}
x = \dfrac{1}{a_1(x) +\dfrac{1}{a_2(x) + \ddots +\dfrac{1}{a_n(x)+ \ddots}}},
\end{equation}
where $a_1(x) = \lfloor1/x\rfloor$ and $a_{n +1}(x) = a_1(T^nx)$ for all $n \geq 1$. The form (\ref{continued fraction expansion}) is said to be the \emph{continued fraction expansion} of $x$ and $a_n(x), n \geq 1$  are called the \emph{partial quotients} of $x$.  Sometimes we write the form (\ref{continued fraction expansion}) as $[a_1(x), a_2(x), \cdots, a_n(x), \cdots]$. For any $n \geq 1$, we denote by $\frac{p_n(x)}{q_n(x)}:= [a_1(x), a_2(x), \cdots, a_n(x)]$ the $n$-th \emph{convergent} of $x$, where $p_n(x)$ and $q_n(x)$ are relatively prime.  Clearly these convergents are rational numbers and $p_n(x)/q_n(x) \rightarrow x$ as $n \rightarrow \infty$ for all $x \in [0,1)$. More precisely,
\begin{equation}\label{diophantine}
\frac{1}{2q_{n+1}^2(x)} \leq \frac{1}{2q_n(x)q_{n+1}(x)} \leq \left|x-\frac{p_n(x)}{q_n(x)}\right| \leq \frac{1}{q_n(x)q_{n+1}(x)} \leq \frac{1}{q_n^2(x)}.
\end{equation}
This is to say that the speed of $p_n(x)/q_n(x)$ approximating to $x$ is dominated by $q_n^{-2}(x)$. So the denominator of the $n$-th convergent $q_n(x)$ plays an important role in the problem of Diophantine approximation. For more details about continued fractions, we refer the reader to a monograph of Khintchine \cite{lesKhi64}.

Ibragimov\cite{lesIbr61} proved that a central limit theorem holds for the sequence $\{q_n, n\geq 1\}$. Furthermore, Morita \cite{lesMor94} showed that the remainder in the central limit theorem is as we would expect $\mathcal{O}(n^{-1/2})$ (see also Misevi\v{c}ius \cite{lesMis70} and Philipp \cite{lesPhi70}), where $a_n = \mathcal{O}(b_n)$ denotes that there exists a constant $C>0$ such that $|a_n| \leq C \cdot b_n$ for all $n \geq 1$.

\begin{theorem}[\cite{lesIbr61}]\label{central limit theorem for qn}
For every $y \in \mathbb{R}$,
\[
  \lim_{n \to \infty}  \mathrm{P} \left\{x \in [0,1): \frac{\log q_n(x)- \frac{\pi^2}{12\log 2}n}{\sigma_1\sqrt{n}} \leq y \right\} = \frac{1}{\sqrt{2\pi}} \int_{-\infty}^y e^{-\frac{t^2}{2}}dt,
\]
where $\sigma_1 > 0$ is an absolute constant.
\end{theorem}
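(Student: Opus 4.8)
\noindent\emph{Proof strategy.} The plan is to reduce $\log q_n$ to a Birkhoff sum for the Gauss transformation, prove the central limit theorem for that sum with respect to the Gauss measure by means of classical mixing estimates, and then transfer the statement to Lebesgue measure. To linearise $\log q_n$, recall the classical continued fraction identity
\[
\prod_{k=0}^{n-1} T^k x \;=\; \bigl|\,q_{n-1}(x)\,x - p_{n-1}(x)\,\bigr|,
\]
which, combined with (\ref{diophantine}) (these give $\tfrac12 q_n^{-1}(x)\le |q_{n-1}(x)x-p_{n-1}(x)|\le q_n^{-1}(x)$), yields
\[
\log q_n(x) \;=\; -\sum_{k=0}^{n-1}\log\!\bigl(T^k x\bigr) \;+\; \mathcal{O}(1),
\]
where the error is bounded by $\log 2$ uniformly in $x$ and $n$. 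Since we normalise by $\sqrt{n}$, this bounded term is irrelevant and it suffices to prove the central limit theorem for $S_n(x):=\sum_{k=0}^{n-1} f(T^k x)$ with $f(x)=-\log x$. Introducing the Gauss measure $\mu_G$ (density $\tfrac{1}{\log 2}\,\tfrac{1}{1+x}$ with respect to $\mathrm{P}$, so that $\tfrac{1}{2\log 2}\le \tfrac{d\mu_G}{d\mathrm{P}}\le \tfrac{1}{\log 2}$), which is $T$-invariant, a direct computation gives $\int_0^1 f\, d\mu_G=\tfrac{1}{\log 2}\int_0^1\tfrac{-\log x}{1+x}\,dx=\tfrac{\pi^2}{12\log 2}$, exactly the centring constant in the statement; moreover $f\in L^2(\mu_G)$, indeed $\mu_G(f>t)\asymp e^{-t}$, so $f$ has exponential moments.

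Next I would invoke the central limit theorem for mixing sequences. The process $(T^k x)_{k\ge 0}$ is stationary under $\mu_G$ and $\phi$-mixing with exponentially decaying coefficients; this is the quantitative Gauss--Kuzmin--L\'evy theorem, equivalently the spectral gap of the Gauss--Kuzmin--Wirsing transfer operator on functions of bounded variation. Hence the classical central limit theorem for $\phi$-mixing stationary sequences applies to the $L^2$ (even exponentially integrable) observable $f$ and gives
\[
\frac{S_n - n\int f\, d\mu_G}{\sqrt{n}}\ \Longrightarrow\ N(0,\sigma_1^2),\qquad
\sigma_1^2=\mathrm{Var}_{\mu_G}(f)+2\sum_{k\ge 1}\mathrm{Cov}_{\mu_G}\!\bigl(f,\,f\circ T^k\bigr),
\]
the series converging absolutely by the exponential mixing. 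One then checks $\sigma_1^2>0$: were it zero, $f-\int f\,d\mu_G$ would be an $L^2$ coboundary $g-g\circ T$, and exponential mixing would force $g$ to be essentially bounded, making $S_n$ bounded and contradicting that $\log q_n(x)/n\to\tfrac{\pi^2}{12\log 2}$ almost everywhere with genuine fluctuations (this non-degeneracy is itself classical).

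Finally I would transfer the result from $\mu_G$ to $\mathrm{P}$. Because the transfer operator has a spectral gap, the law of the orbit statistics forgets the initial distribution exponentially fast, so the central limit theorem holds verbatim for any absolutely continuous starting measure with a reasonable density, in particular for $\mathrm{P}$, with the same $\sigma_1$; alternatively one runs the Nagaev--Guivarc'h characteristic-function argument with the twisted operator $\mathcal{L}_t g=\mathcal{L}(e^{itf}g)$, whose leading eigenvalue behaves like $\exp\!\bigl(it\,\tfrac{\pi^2}{12\log 2}-\tfrac12 t^2\sigma_1^2+o(t^2)\bigr)$ and which delivers the central limit theorem (and, incidentally, Morita's $\mathcal{O}(n^{-1/2})$ remainder) for both measures simultaneously. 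Combining the three steps proves the theorem. I expect the main obstacle to be the quantitative mixing input for the Gauss map and the verification that $\sigma_1>0$; the reduction to a Birkhoff sum and the change of measure are routine.
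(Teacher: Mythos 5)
This theorem is not proved in the paper at all: it is quoted from Ibragimov \cite{lesIbr61} and used as a black box, so there is no internal proof to compare against. The one ingredient of your outline that the paper does establish is the linearisation $\log q_n(x)=-\sum_{k=0}^{n-1}\log T^kx+\mathcal{O}(1)$ with error at most $\log 2$ --- that is exactly Proposition \ref{jie}. Your overall route (Birkhoff sum, mixing CLT under the Gauss measure, transfer to Lebesgue measure) is the classical one, but two steps in your sketch are not yet proofs.

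First, the process $(T^kx)_{k\ge0}$ is \emph{not} $\phi$- or $\psi$-mixing: $\sigma(T^kx)$ already contains $\sigma(a_{k+1},a_{k+2},\dots)$, so the past and future $\sigma$-algebras of this process each coincide with the full Borel $\sigma$-algebra and its mixing coefficients do not decay. What is $\psi$-mixing is the digit process $(a_k)$, as the paper recalls from Philipp. Since $f(T^kx)=-\log T^kx$ depends on \emph{all} digits from index $k+1$ on, you must approximate it by functions of finitely many digits (with exponentially small error, because cylinders shrink exponentially) and invoke a CLT for \emph{functionals} of a mixing sequence. That approximation step is the technical core of Ibragimov's argument and cannot be waved through; alternatively, the Nagaev--Guivarc'h route you mention in passing avoids it entirely, but then the whole proof should be run through the twisted transfer operator rather than through mixing coefficients.

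Second, your non-degeneracy argument for $\sigma_1>0$ is circular. If $f-\int f\,d\mu_G$ were a coboundary $g-g\circ T$ with $g$ bounded, you would get $\log q_n(x)=\frac{\pi^2}{12\log 2}n+\mathcal{O}(1)$, which is perfectly consistent with $\log q_n(x)/n\to\frac{\pi^2}{12\log 2}$ almost everywhere; the ``genuine fluctuations'' you appeal to are precisely what the theorem asserts. The standard fix is a Liv\v{s}ic-type rigidity argument on periodic orbits: a coboundary identity (with $g$ in the space where the transfer operator has a spectral gap, hence defined pointwise) forces $\frac1p\sum_{k=0}^{p-1}f(T^kx)$ to equal the same constant for every periodic point of period $p$, and this already fails for the two fixed points $x=(\sqrt5-1)/2$ and $x=\sqrt2-1$ of $T$, whose values of $-\log x$ differ. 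With these two repairs your outline is a correct reconstruction of the classical proof.
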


\begin{remark}
The constant $\sigma_1$ can be obtained by the generalized tranfer operators $L_s$, $s>1$ of Mayer \cite{lesMay90}, where $L_s$, $s>1$ are defined by
\[
L_s f(z) = \sum_{n = 1}^\infty \left(\frac{1}{n + z}\right)^s f\left(\frac{1}{n + z}\right)
\]
in a suitable space of holomorphic functions.  A particular property of these operators is to have a simple dominant eigenvalue $\lambda(s) >0$. Flajolet and Vall\'{e}e \cite{lesF.V98} pointed out that there is a beautiful expression of $\sigma_1$ using the dominant eigenvalue of $L_2$,
\begin{equation*}
\sigma_1^2 = \lambda^{\prime\prime}(2) - (\lambda^{\prime}(2))^2,
\end{equation*}
where $\lambda^{\prime}(s)$ and $\lambda^{\prime\prime}(s)$ denote the derivative and second derivative of $\lambda(s)$ respectively.
\end{remark}

Later, Philipp and Stackelberg \cite{lesP.S69} provided the following law of the iterated logarithm for the sequence $\{q_n, n \geq 1\}$.

\begin{theorem}[\cite{lesP.S69}]\label{q law}
For $\mathrm{P}$-almost all $x \in [0,1)$,
  \[
  \limsup\limits_{n \to \infty} \frac{\log q_n(x)- \frac{\pi^2}{12\log 2}n}{\sigma_1\sqrt{2n\log\log n}} = 1
  \]
  and
  \[
  \liminf\limits_{n \to \infty} \frac{\log q_n(x)- \frac{\pi^2}{12\log 2}n}{\sigma_1\sqrt{2n\log\log n}} = -1,
  \]
  where $\sigma_1 > 0$ is the same constant as in Theorem \ref{central limit theorem for qn}.
\end{theorem}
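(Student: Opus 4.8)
The plan is to reduce $\log q_n(x)$ to a Birkhoff sum for the Gauss map and then invoke a law of the iterated logarithm for weakly dependent sequences. First I would record the elementary identity
\[
\bigl|q_n(x)\,x - p_n(x)\bigr| = \prod_{k=0}^{n} T^k x \qquad (n \geq 0),
\]
which follows by induction from $q_n x - p_n = -(q_{n-1}x - p_{n-1})\,T^n x$, itself a consequence of $x = (p_n + p_{n-1}T^n x)/(q_n + q_{n-1}T^n x)$. Combining it with the two-sided bound $\frac{1}{2q_{n+1}(x)} \leq |q_n(x)x - p_n(x)| \leq \frac{1}{q_{n+1}(x)}$ from (\ref{diophantine}) gives $\bigl|\log q_{n+1}(x) + \sum_{k=0}^{n}\log T^k x\bigr| \leq \log 2$, whence
\[
\log q_n(x) = \sum_{k=0}^{n-1} f(T^k x) + O(1) \quad \text{uniformly in } x \text{ and } n, \qquad \text{where } f(x):= -\log x .
\]
Since the remainder is $O(1) = o(\sqrt{n\log\log n})$, it suffices to prove the LIL for $S_n f := \sum_{k=0}^{n-1} f\circ T^k$.

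Next I would pass to the Gauss measure $\mu_G = \tfrac{1}{\log 2}\tfrac{dx}{1+x}$, which is $T$-invariant and equivalent to $\mathrm{P}$, so that almost-sure statements for the two measures coincide. A direct computation gives $\int f\,d\mu_G = \tfrac{1}{\log 2}\int_0^1 \tfrac{-\log x}{1+x}\,dx = \tfrac{\pi^2}{12\log 2}$, exactly the centering constant in the statement. The observable $f$ is unbounded, but $\mu_G\{f > M\}$ decays exponentially in $M$, so $f \in L^p(\mu_G)$ for every $p<\infty$; moreover the system $([0,1),T,\mu_G)$ is exponentially $\psi$-mixing (Gauss--Kuzmin--L\'evy), so $\{f\circ T^k\}_{k\geq 0}$ is a stationary $\psi$-mixing sequence with geometric rate and moments of all orders.

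I would then invoke an almost sure invariance principle for such sequences (this is essentially Philipp and Stackelberg's route): after enlarging the probability space there is a standard Brownian motion $\{W(t)\}$ and a constant $\sigma_1 \geq 0$ with $S_n f - n\int f\,d\mu_G = \sigma_1 W(n) + o\bigl(\sqrt{n\log\log n}\,\bigr)$ a.s. Khintchine's LIL for $W$ then yields $\limsup_n (S_n f - n\int f\,d\mu_G)/\sqrt{2n\log\log n} = \sigma_1$ and the matching $\liminf = -\sigma_1$. The variance $\sigma_1^2 = \int \bar f^2\,d\mu_G + 2\sum_{k\geq 1}\int \bar f\,(\bar f\circ T^k)\,d\mu_G$, with $\bar f = f - \int f\,d\mu_G$, is the same quantity that governs the central limit theorem of Theorem \ref{central limit theorem for qn} (obtained from the same Birkhoff-sum representation of $\log q_n$), so it is the constant $\sigma_1$ appearing there. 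Transferring back to $\mathrm{P}$ by equivalence, reinserting the $O(1)$ term and dividing by $\sigma_1$ gives the theorem.

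The main obstacle is twofold. First, the relevant invariance principle (or LIL) must be run for the \emph{unbounded} observable $f$; one handles this by truncating $f$ at a slowly growing level, estimating the discarded part via the Borel--Cantelli lemma together with the exponential tail, and applying the bounded-observable theory to the rest --- routine but the technical heart of the argument. Second, one must verify the non-degeneracy $\sigma_1 > 0$, i.e. that $f$ is not cohomologous to a constant in $L^2(\mu_G)$; otherwise $\log q_n(x) - \tfrac{\pi^2}{12\log 2}n$ would be bounded, contradicting Theorem \ref{central limit theorem for qn}. This follows from the standard coboundary obstruction.
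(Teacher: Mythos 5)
The paper does not prove this statement at all: Theorem \ref{q law} is imported verbatim from Philipp and Stackelberg \cite{lesP.S69}, so there is no internal proof to compare against. Judged on its own terms, your outline is the standard and essentially correct route. Your reduction $\log q_n(x) = -\sum_{k=0}^{n-1}\log T^k x + O(1)$ with $|O(1)|\le \log 2$ is exactly the paper's Proposition \ref{jie} (your identity $|q_nx-p_n|=\prod_{k=0}^{n}T^kx$ and the two-sided bound from (\ref{diophantine}) are the same computation), and the evaluation $\frac{1}{\log 2}\int_0^1\frac{-\log x}{1+x}\,dx=\frac{\pi^2}{12\log 2}$ correctly identifies the centering constant. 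Two caveats. First, the entire analytic content of the theorem is concentrated in the step you invoke as a black box --- an almost sure invariance principle (or LIL) for the stationary, exponentially $\psi$-mixing but \emph{unbounded} observable $f=-\log x$; you acknowledge the truncation issue but do not carry it out, so the proposal is a reduction to a cited result of comparable depth rather than a self-contained proof. (Historically, \cite{lesP.S69} argued via mixing and blocking directly; the ASIP route is the later Philipp--Stout machinery, so attributing it to Philipp--Stackelberg is slightly anachronistic, though immaterial.) Second, your non-degeneracy argument is circular as phrased: a coboundary $\bar f = g - g\circ T$ with $g\in L^2$ only makes $S_n\bar f$ bounded in $L^2$, not pointwise bounded, so ``$\log q_n-\frac{\pi^2}{12\log 2}n$ would be bounded'' does not follow immediately; the cleanest course here is simply to take $\sigma_1>0$ as part of the cited Theorem \ref{central limit theorem for qn}. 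Since the paper itself treats the whole theorem as a citation, these gaps are acceptable in context, but they should be flagged as such.
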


A natural question is if there exists some relationship between different expansions of some real number $x \in [0,1)$, for instance, its $\beta$-expansion and continued fraction expansion.
For any irrational number $x \in [0,1)$ and $n \geq 1$, we denote by $k_n(x)$ the exact number of partial quotients in the continued fraction expansion of $x$ given by the first $n$ digits in the $\beta$-expansion of $x$. That is,
\[
k_n(x) = \sup\left\{m \geq 0: J(\varepsilon_1(x),\cdots, \varepsilon_n(x)) \subset I(a_1(x), \cdots, a_m(x))\right\},
\]
where $J(\varepsilon_1(x),\cdots, \varepsilon_n(x))$ and $I(a_1(x), \cdots, a_m(x))$ are called the \emph{cylinders} of $\beta$-expansion and continued fraction expansion respectively (see Section 2). It is easy to check that
\begin{equation}\label{increasing sequence}
0 \leq k_1(x) \leq k_2(x) \leq \cdots\ \ \text{and}\ \ \lim\limits_{n \to \infty}k_n(x) = \infty.
\end{equation}
The quantity $k_n(x)$ was first introduced by Lochs \cite{lesLoc64} for $\beta = 10$ and has been extensively investigated by many mathematicians, see \cite{lesB.I08, lesD.F01, lesFai97, lesFai98, lesL.W08, lesWu06, lesWu08}.
Applying the result of Dajani and Fieldsteel \cite{lesD.F01} (Theorem 5) to $\beta$-expansion and continued fraction expansion, Li and Wu \cite{lesL.W08} obtained a metric result of $\{k_n, n \geq 1\}$, that is, for $\mathrm{P}$-almost all $x \in [0,1)$,
\begin{equation}\label{theorem eq}
\lim\limits_{n \to \infty}\frac{k_n(x)}{n} = \frac{6\log2\log\beta}{\pi^2}.
\end{equation}
The formula (\ref{theorem eq}) has been stated for $\beta=10$ by a pioneering result of Lochs \cite{lesLoc64}. Barreira and Iommi \cite{lesB.I08} proved that the irregular set of points $x \in [0,1)$ for which the limit in (\ref{theorem eq}) does not exist has Hausdorff dimension 1. Li and Wu \cite{lesL.W08} gave some asymptotic results of $k_n(x)/n$ for any irrational $x \in [0,1)$ not just a kind of almost all result (see also Wu \cite{lesWu06}). For the special case $\beta = 10$, some limit theorems of $\{k_n, n \geq 1\}$ were studied in the earlier literature. For example, using Ruelle-Mayer operator, Faivre \cite{lesFai97} showed that the Lebesgue measure of the set of $x$ for which $k_n(x)/n$ deviates away from $(6\log2\log10)/\pi^2$ decreases exponentially to 0. Later, he also proved a central limit theorem for the sequence $\{k_n, n \geq 1\}$ in \cite{lesFai98}. The law of the iterated logarithm for the sequence $\{k_n, n \geq 1\}$  was established by Wu \cite{lesWu08}. 

We wonder if the similar limit theorems of the sequence $\{k_n, n \geq 1\}$ are still valid for general $\beta >1$. It is worth pointing out that the lengths of cylinders (see Section 2) play an important role in the study of $\beta$-expansion (see \cite{lesB.W14, lesF.W12}). The methods of Faivre (see \cite{lesFai97, lesFai98}) and Wu (see \cite{lesWu06, lesWu08}) rely heavily on the length of a cylinder for $\beta = 10$.
In fact, the cylinder of order $n$ is a regular interval and its length equals always to $10^{-n}$ for the special case $\beta = 10$. For the general case $\beta >1$, it is well-known that the cylinder of order $n$ is a left-closed and right-open interval and its length has an absolute upper bound $\beta^{-n}$. Fan and Wang \cite{lesF.W12} obtained that the growth of the lengths of cylinders is multifractal and that the multifractal spectrum depends on $\beta$. However, for some $\beta >1$, the cylinder of order $n$ is irregular and there is no nontrivial universal lower bound for its length, which can be much smaller than $\beta^{-n}$.
This is the main difficulty we met. We establish a lower bound (not necessarily absolute) of the length of a cylinder (i.e., Proposition \ref{zhongyao1}) to extend the results of Faivre \cite{lesFai98} and Wu \cite{lesWu08} from $\beta =10$ to any $\beta >1$.

Our first result is a central limit theorem for the sequence $\{k_n, n \geq 1\}$, which generalizes the result of Faivre \cite{lesFai98}.

\begin{theorem}\label{central limit theorem}
Let $\beta >1$. For every $y \in \mathbb{R}$,
\[
 \lim_{n \to \infty} \mathrm{P} \left\{x \in [0,1): \frac{k_n(x)-\frac{6\log2\log\beta}{\pi^2}n}{\sigma\sqrt{n}} \leq y \right\} = \frac{1}{\sqrt{2\pi}} \int_{-\infty}^y e^{-\frac{t^2}{2}}dt,
\]
where $\sigma > 0$ is a constant only depending on $\beta$.
\end{theorem}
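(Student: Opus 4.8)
The plan is to reduce the central limit theorem for $\{k_n, n\ge 1\}$ to the already-established central limit theorem for $\{\log q_m, m\ge 1\}$ (Theorem \ref{central limit theorem for qn}), by exploiting the fact that $k_n(x)$ is, up to controlled errors, the inverse function of $m \mapsto \log q_m(x)$ evaluated at $n\log\beta$. The heuristic is that the cylinder $J(\varepsilon_1(x),\dots,\varepsilon_n(x))$ has length roughly $\beta^{-n}$, the cylinder $I(a_1(x),\dots,a_m(x))$ has length roughly $q_m(x)^{-2}$, and so $k_n(x)=m$ essentially when $q_m(x)^{-2}\approx \beta^{-n}$, i.e. $2\log q_m(x)\approx n\log\beta$. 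Combined with $\log q_m(x)\approx \frac{\pi^2}{12\log 2}m$, this gives the centering constant $\frac{6\log2\log\beta}{\pi^2}n$.

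First I would make the comparison between $k_n(x)$ and the convergent denominators precise. Using the definition of $k_n(x)$ via cylinder inclusion together with the standard bounds on lengths of continued-fraction cylinders (the length of $I(a_1,\dots,a_m)$ lies between $\frac{1}{2q_m^2}$ and $\frac{1}{q_m^2}$, from (\ref{diophantine})) and the lower bound on the length of a $\beta$-cylinder supplied by Proposition \ref{zhongyao1}, I would show that there is an event of probability tending to $1$ (or even a full-measure set, with an error term that is $o(\sqrt n)$) on which
\[
2\log q_{k_n(x)}(x) \le n\log\beta \le 2\log q_{k_n(x)+1}(x) + R_n(x),
\]
where $R_n(x)$ is an error coming from the non-absolute lower bound on $\beta$-cylinder lengths and from the distortion of continued-fraction cylinders. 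The key point is to prove that $R_n(x)/\sqrt n \to 0$ in probability, which is precisely where Proposition \ref{zhongyao1} is used: the lower bound on cylinder length fails only on a set that is small enough (exponentially small, or at least $o(1)$) that it does not affect the Gaussian scaling.

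Next I would convert the bound on $\log q_{k_n}$ into a bound on $k_n$ itself. Set $m_n := k_n(x)$. The two-sided inequality above says $\log q_{m_n}(x) \le \frac{n\log\beta}{2}$ and $\log q_{m_n+1}(x) \ge \frac{n\log\beta}{2} - \frac{R_n(x)}{2}$. Writing $\log q_m(x) = \frac{\pi^2}{12\log 2}m + \sigma_1\sqrt{m}\,\xi_m(x)$, where $\xi_m$ is asymptotically standard normal by Theorem \ref{central limit theorem for qn}, and using $m_n \sim \frac{6\log2\log\beta}{\pi^2} n$ (from (\ref{theorem eq}), or re-derived here), I would solve for $m_n$ to get
\[
k_n(x) = \frac{6\log2\log\beta}{\pi^2}\, n \;-\; \frac{12\log 2}{\pi^2}\,\sigma_1\sqrt{m_n}\,\xi_{m_n}(x) \;+\; o(\sqrt n).
\]
Since $\sqrt{m_n} \sim \sqrt{\frac{6\log2\log\beta}{\pi^2}}\sqrt n$, this identifies the limiting variance as $\sigma^2 = \big(\frac{12\log2}{\pi^2}\big)^2 \sigma_1^2 \cdot \frac{6\log2\log\beta}{\pi^2}$, a constant depending only on $\beta$. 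The passage from a CLT for $\log q_m$ (with a \emph{deterministic} index $m$) to the same CLT with the \emph{random} index $m_n = k_n(x)$ requires an Anscombe-type argument: one needs that $\log q_m$ does not fluctuate too much over the range of $m$ near $m_n$, i.e. a bound of the form $\max_{|m-m^\ast|\le \delta m^\ast}|\log q_m - \log q_{m^\ast}| = o(\sqrt{m^\ast})$ in probability as $\delta\to 0$, which follows from the variance estimate $\mathrm{Var}(\log q_m) = O(m)$ and Morita's $\mathcal O(n^{-1/2})$ error term, or from a maximal inequality.

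The main obstacle, as the authors themselves flag in the introduction, is controlling the error term $R_n(x)$ coming from the irregularity of $\beta$-cylinders: for some $\beta$ there is no universal lower bound on $|J(\varepsilon_1,\dots,\varepsilon_n)|$, so the clean sandwich $2\log q_{k_n} \le n\log\beta \le 2\log q_{k_n+1} + \mathcal O(1)$ can fail on a bad set. Proposition \ref{zhongyao1} is designed to give a lower bound of the form $|J(\varepsilon_1,\dots,\varepsilon_n)| \ge \beta^{-n-\phi(n)}$ with $\phi(n) = o(\sqrt n)$ off a set of probability $o(1)$ (and summably small, to also handle almost-sure statements needed for the companion law of the iterated logarithm). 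Verifying that $\phi(n)/\sqrt n \to 0$ with the required uniformity — essentially a large-deviation estimate for the sum $\sum_{i\le n}\log|J|/|J'|$ type quantity in terms of the Parry measure — is the technical heart of the argument; once it is in place, the reduction to Theorem \ref{central limit theorem for qn} via the Anscombe-type inversion is routine.
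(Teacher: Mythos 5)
Your proposal follows essentially the same route as the paper: an Anscombe-type random-index version of Ibragimov's CLT for $\log q_{k_n(x)}(x)$ (the paper's Lemma \ref{central limit theorem zilie}, proved via Faivre's maximal inequality), combined with the almost-sure estimate $\log q_{k_n(x)}(x) = \tfrac{n\log\beta}{2} + o(\sqrt n)$ obtained by sandwiching the $\beta$-cylinder length between Proposition \ref{zhongyao1} and Proposition \ref{fundamental 2} and showing $l_n(x)=o(\sqrt n)$ almost surely (Lemmas \ref{l-n genhao} and \ref{q-n jixian}), with a variance constant agreeing with the paper's (\ref{sigma constant}). The one step you gloss over --- passing from $\log q_{k_n+1}$ (or $q_{k_n+3}$ in the paper's version of the sandwich) back to $\log q_{k_n}$ --- is handled in the paper by a Borel--Bernstein estimate giving $\log a_{k_n(x)+m}(x)=o(\sqrt n)$ almost surely (Lemma \ref{a-n shang}), exactly as your inversion implicitly requires.
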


We also prove a law of the iterated logarithm for the sequence $\{k_n, n \geq 1\}$, which covers the result of Wu \cite{lesWu08}.

\begin{theorem}\label{law of the iterated logarithm}
Let $\beta >1$. For $\mathrm{P}$-almost all $x \in [0,1)$,
  \[
  \limsup\limits_{n \to \infty} \frac{k_n(x)-\frac{6\log2\log\beta}{\pi^2}n}{\sigma\sqrt{2n\log\log n}} = 1
\]
and
\[
  \liminf\limits_{n \to \infty} \frac{k_n(x)-\frac{6\log2\log\beta}{\pi^2}n}{\sigma\sqrt{2n\log\log n}} = -1,
  \]
where $\sigma >0$ is the same constant as in Theorem \ref{central limit theorem}.
\end{theorem}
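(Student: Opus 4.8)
\medskip
\noindent\emph{Proof strategy.}
I would deduce Theorem~\ref{law of the iterated logarithm} from the law of the iterated logarithm for $\{\log q_n,n\ge1\}$ (Theorem~\ref{q law}) by an inversion argument: the $n$-th $\beta$-cylinder pins down a continued fraction cylinder of order $k_n(x)$ whose length is comparable to $q_{k_n(x)}^{-2}(x)$, so $k_n$ is, up to a negligible error, the inverse function of $m\mapsto\frac{2}{\log\beta}\log q_m$. The key ingredient is the almost sure comparison
\[
2\log q_{k_n(x)}(x)=n\log\beta+o\bigl(\sqrt{n\log\log n}\bigr)\qquad(\mathrm{P}\text{-a.e. }x),
\]
which in a quantitative, in-measure form is also what underlies Theorem~\ref{central limit theorem}; granting it, the rest is soft and identifies the constant.

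\medskip
\noindent\emph{Step 1 (the comparison).}
By definition $k_n(x)=m$ means $J(\varepsilon_1(x),\dots,\varepsilon_n(x))\subset I(a_1(x),\dots,a_m(x))$ but $J(\varepsilon_1(x),\dots,\varepsilon_n(x))\not\subset I(a_1(x),\dots,a_{m+1}(x))$. Since $|I(a_1(x),\dots,a_m(x))|=\bigl(q_m(x)(q_m(x)+q_{m-1}(x))\bigr)^{-1}\le q_m^{-2}(x)$, the inclusion gives $2\log q_{k_n(x)}(x)\le-\log|J(\varepsilon_1(x),\dots,\varepsilon_n(x))|$. For the reverse inequality, non-inclusion forces $J(\varepsilon_1(x),\dots,\varepsilon_n(x))$ to overshoot an endpoint of $I(a_1(x),\dots,a_{k_n(x)+1}(x))$, hence its length exceeds the distance from $x$ to that endpoint, which by \eqref{diophantine} and a short computation is at least $q_{k_n(x)+1}^{-2}(x)$ up to a factor $e^{-\mathcal{O}(\log n)}$; the $\mathcal{O}(\log n)$ bounds the sizes of individual partial quotients and the lengths of runs of partial quotients equal to $1$ around position $k_n(x)+1$, both $\mathcal{O}(\log n)$ for a.e. $x$ by Borel--Cantelli (the events $\{a_j(x)>j^2\}$ and $\{a_{j+1}(x)=\dots=a_{j+t}(x)=1\}$ with $t=C\log j$ having summable probabilities). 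Together with $\log q_{k_n(x)+1}(x)=\log q_{k_n(x)}(x)+\mathcal{O}(\log n)$ a.e. and the trivial bound $|J(\varepsilon_1(x),\dots,\varepsilon_n(x))|\le\beta^{-n}$ this gives
\[
2\log q_{k_n(x)}(x)\ \le\ -\log|J(\varepsilon_1(x),\dots,\varepsilon_n(x))|\ \le\ 2\log q_{k_n(x)}(x)+\mathcal{O}(\log n).
\]
It then remains to control the $\beta$-cylinder: by Proposition~\ref{zhongyao1}, combined with a further Borel--Cantelli estimate limiting how often and how deeply the $\beta$-digit word of $x$ can shadow the expansion of $1$, one gets $-\log|J(\varepsilon_1(x),\dots,\varepsilon_n(x))|=n\log\beta+o(\sqrt{n\log\log n})$ for a.e. $x$. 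Combining the last two displays yields the comparison; in particular $k_n(x)/n\to\frac{6\log2\log\beta}{\pi^2}$ (which is \eqref{theorem eq}) and $k_{n+1}(x)-k_n(x)=\mathcal{O}(\log n)$ a.e.

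\medskip
\noindent\emph{Step 2 (the inversion).}
Write $c=\frac{\pi^2}{6\log2}$, $\gamma=\frac{\log\beta}{c}=\frac{6\log2\log\beta}{\pi^2}$, $b_n=\sqrt{2n\log\log n}$ and $G(m)=2\log q_m(x)-cm$, so Theorem~\ref{q law} reads $\limsup_m G(m)/(2\sigma_1\sqrt{2m\log\log m})=1$ and $\liminf_m G(m)/(2\sigma_1\sqrt{2m\log\log m})=-1$ for a.e. $x$. The comparison becomes $c\,k_n(x)+G(k_n(x))=n\log\beta+o(b_n)$, i.e.
\[
k_n(x)-\gamma n=-\tfrac1c\,G(k_n(x))+o(b_n).
\]
Since $k_n(x)\sim\gamma n$, one has $\sqrt{2k_n(x)\log\log k_n(x)}\sim\sqrt{\gamma}\,b_n$, so the upper bound $G(m)\le(1+\varepsilon)\cdot2\sigma_1\sqrt{2m\log\log m}$ valid for all large $m$ yields $\limsup_n(k_n(x)-\gamma n)/(\sigma b_n)\le1$ with
\[
\sigma:=\frac{2\sigma_1\sqrt{\gamma}}{c}=2\sigma_1(\log\beta)^{1/2}\Bigl(\frac{6\log2}{\pi^2}\Bigr)^{3/2},
\]
the same constant as in Theorem~\ref{central limit theorem}. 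For the reverse inequality, take $m_j\to\infty$ with $G(m_j)\le-(1-\varepsilon)\cdot2\sigma_1\sqrt{2m_j\log\log m_j}$ and let $n_j$ be an integer with $k_{n_j}(x)$ closest to $m_j$; the jump bound gives $|k_{n_j}(x)-m_j|=\mathcal{O}(\log n_j)$, hence (using $\log a_j(x)=\mathcal{O}(\log j)$ a.e.) $G(k_{n_j}(x))=G(m_j)+o(b_{n_j})$ and $n_j\sim m_j/\gamma$, so the displayed identity gives $k_{n_j}(x)-\gamma n_j\ge(1-2\varepsilon)\sigma b_{n_j}$ for large $j$. Thus $\limsup_n(k_n(x)-\gamma n)/(\sigma b_n)=1$, and $\liminf_n(k_n(x)-\gamma n)/(\sigma b_n)=-1$ follows identically with the roles of $\limsup$ and $\liminf$ in Theorem~\ref{q law} exchanged.

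\medskip
\noindent\emph{Main obstacle.}
The crux is the $\beta$-cylinder estimate in Step~1. For $\beta=10$ (Faivre~\cite{lesFai98}, Wu~\cite{lesWu08}) one has $|J(\varepsilon_1(x),\dots,\varepsilon_n(x))|=10^{-n}$ exactly and the comparison is immediate; for general $\beta>1$ the cylinder of order $n$ may be far shorter than $\beta^{-n}$ and has no universal lower bound, so one must show its logarithmic defect is $o(\sqrt{n\log\log n})$ for $\mathrm{P}$-a.e. $x$ --- exactly what Proposition~\ref{zhongyao1} provides. With that granted, the positional/endpoint estimate and the control of the jumps $k_{n+1}-k_n$ are routine Borel--Cantelli arguments, and Step~2 is purely deterministic.
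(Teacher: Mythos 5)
Your strategy is essentially the paper's, built from the same ingredients: the almost-sure comparison $2\log q_{k_n(x)}(x)=n\log\beta+o(\sqrt{n\log\log n})$ obtained from Propositions \ref{zhongyao1} and \ref{fundamental 2} together with Borel--Cantelli estimates (the paper's Lemmas \ref{l-n genhao}, \ref{a-n shang}, \ref{q-n jixian}, \ref{l-n chongduishu}); the jump bound on $k_{n+1}(x)-k_n(x)$ (Lemma \ref{inequality 2}); and the transfer of Theorem \ref{q law} to the subsequence $\{k_n(x)\}$ with the roles of $\limsup$ and $\liminf$ exchanged and with $\sigma=\sigma_1\sqrt{a}/b$, which matches \eqref{sigma constant}. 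The one genuine difference is the final assembly: the paper first proves the iterated-logarithm law for $\log q_{k_n(x)}(x)-bk_n(x)$ normalized by $\sqrt{2k_n(x)\log\log k_n(x)}$ (Lemma \ref{inequality 3}, by sandwiching each index $i$ with $k_n(x)\le i\le k_{n+1}(x)$) and then concludes via the Slutsky-type decomposition $X_nY_n+Z_n$, whereas you invert directly, using one half of Theorem \ref{q law} uniformly in $m$ for the one-sided bound and hitting the extremal times $m_j$ by nearby values $k_{n_j}(x)$ for attainment; both assemblies work and are of comparable length. Three small corrections. First, a sign slip in Step~2: the bound $G(m)\le(1+\varepsilon)2\sigma_1\sqrt{2m\log\log m}$ yields the \emph{lower} estimate $\liminf_n (k_n(x)-\gamma n)/(\sigma\sqrt{2n\log\log n})\ge-1$, while $\limsup_n(k_n(x)-\gamma n)/(\sigma\sqrt{2n\log\log n})\le1$ needs $G(m)\ge-(1+\varepsilon)2\sigma_1\sqrt{2m\log\log m}$; you clearly intend the exchange, but the sentence as written has it backwards. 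Second, the Borel--Cantelli estimate that feeds Proposition \ref{zhongyao1} concerns the run of zeros $l_n(x)$ just after position $n$ (the paper shows $\mathrm{P}(l_n\ge\varepsilon\sqrt n)\le C\beta^{-\varepsilon\sqrt n}$ is summable), not how the digit word shadows the expansion of $1$. Third, your endpoint/overshoot re-derivation of the lower bound on $|J(\varepsilon_1(x),\dots,\varepsilon_n(x))|$, with its control of runs of partial quotients equal to $1$, is unnecessary: Proposition \ref{fundamental 2} already gives $|J(\varepsilon_1(x),\dots,\varepsilon_n(x))|\ge(6q_{k_n(x)+3}^2(x))^{-1}$, after which only $\log a_{k_n(x)+m}(x)=o(\sqrt n)$ (Lemma \ref{a-n shang}) is needed rather than your stronger $\mathcal{O}(\log n)$ claims.
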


\section{Preliminary}
This section is devoted to recalling some definitions and basic properties of the $\beta$-expansion and continued fraction expansion. For more properties of $\beta$-expansion and continued fraction expansion, see \cite{lesBla89, lesD.K02, lesF.S92, lesHT08, lesKhi64, lesL.W08, lesPar60, lesRen57, lesSch97, lesSch80} and the references quoted therein.

\subsection{$\beta$-expansion}

 Note that the number 1 is not in the domain of $T_\beta$, but we can still speak of the $\beta$-expansion of 1. Let us introduce the infinite $\beta$-expansion of 1, which is a crucial quantity in the study of $\beta$-expansion. We define $\varepsilon_1(1)= \lfloor\beta\rfloor$ and $\varepsilon_{n +1}(1) = \lfloor\beta T_\beta^n(1)\rfloor$ with $T_\beta (1) = \beta - \lfloor\beta\rfloor$ for all $n \geq 1$. Then the number 1 can be uniquely developed into a finite or infinite series denoted by
\begin{equation*}
1 = \frac{\varepsilon_1(1)}{\beta} + \frac{\varepsilon_2(1)}{\beta^2} + \cdots + \frac{\varepsilon_n(1)}{\beta^n} + \cdots.
\end{equation*}
Here we write the $\beta$-expansion of 1 as $\varepsilon(1, \beta) =(\varepsilon_1(1),\varepsilon_2(1),\cdots, \varepsilon_n(1),\cdots)$. If the $\beta$-expansion of 1 is finite, i.e., $\varepsilon(1, \beta) =(\varepsilon_1(1),\varepsilon_2(1),\cdots, \varepsilon_n(1),0^\infty)$ with $\varepsilon_n(1) \neq 0$, where $\omega^\infty$ denotes the sequence of infinite repetitions of $\omega$, then $\beta$ is called a \emph{simple Parry number}. We define by $(\varepsilon_1^*(1),\varepsilon_2^*(1),\cdots, \varepsilon_n^*(1),\cdots)$ the
infinite $\beta$-expansion of 1 as $(\varepsilon_1^*(1),\varepsilon_2^*(1),\cdots, \varepsilon_n^*(1),\cdots)= \left((\varepsilon_1(1),\varepsilon_2(1),\cdots, \varepsilon_n(1)-1)^\infty\right)$ if $\beta$ is a simple Parry number and as $(\varepsilon_1^*(1),\varepsilon_2^*(1),\cdots, \varepsilon_n^*(1),\cdots) =(\varepsilon_1(1),\varepsilon_2(1),\cdots, \varepsilon_n(1),\cdots)$ if $\beta$ is not a simple Parry number. We sometimes write the infinite $\beta$-expansion of the number 1 as $\varepsilon^*(1, \beta)$ for simplicity.


\begin{definition}
An n-block $(\varepsilon_1 ,\varepsilon_2, \cdots, \varepsilon_n)$ is said to be admissible for $\beta$-expansion if there exists $x \in [0,1)$ such that $\varepsilon_i(x) = \varepsilon_i$ for all $1 \leq i \leq n$. An infinite sequence $(\varepsilon_1 ,\varepsilon_2, \cdots, \varepsilon_n, \cdots)$ is admissible if $(\varepsilon_1 ,\varepsilon_2, \cdots, \varepsilon_n)$ is admissible for all $n \geq 1$.
\end{definition}

 We denote by $\Sigma_\beta^n$ the collection of all admissible sequences with length $n$ and by $\Sigma_\beta$ that of all infinite admissible sequences. The following result of R\'{e}nyi \cite{lesRen57} implies that the dynamical system ([0,1), $T_\beta$) admits $\log \beta$ as its topological entropy.

\begin{proposition}[\cite{lesRen57}]\label{Renyi}
Let $\beta >1$. For any $n \geq 1$,
\begin{equation*}
\beta^n \leq \sharp \Sigma_\beta^n \leq \beta^{n+1}/(\beta -1),
\end{equation*}
where $\sharp$ denotes the number of elements of a finite set.
\end{proposition}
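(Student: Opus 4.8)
The plan is to translate the counting of admissible blocks into statements about the \emph{cylinders} $J(\varepsilon_1,\cdots,\varepsilon_n)$ of order $n$, exploiting that $T_\beta$ is piecewise affine with constant slope $\beta$. Throughout, write $w=(\varepsilon_1,\cdots,\varepsilon_n)$ for a block and $J(w)$ for the corresponding cylinder. The first step I would establish is a structural lemma: for every $w\in\Sigma_\beta^n$, the restriction of $T_\beta^n$ to $J(w)$ is affine with slope $\beta^n$ and maps $J(w)$ \emph{bijectively} onto a half-open interval $[0,s_w)$ with $0<s_w\le 1$. This follows by induction on $n$, using that each application of $T_\beta$ carries a branch interval $[j/\beta,(j+1)/\beta)\cap[0,1)$ affinely, with slope $\beta$, onto $[0,1)$ (a ``full'' branch) or onto $[0,T_\beta(1))$ (the rightmost branch, when $\beta$ is not an integer). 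I would then record two consequences: (i) $|J(w)|=s_w\beta^{-n}$; and (ii) since the order-$n$ cylinders are pairwise disjoint and cover $[0,1)$, summing (i) gives $\sum_{w\in\Sigma_\beta^n}s_w=\beta^n$.

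The lower bound is immediate from (i): since $s_w\le 1$, each order-$n$ cylinder has length at most $\beta^{-n}$, so $1=\sum_{w\in\Sigma_\beta^n}|J(w)|\le\sharp\Sigma_\beta^n\cdot\beta^{-n}$, that is, $\sharp\Sigma_\beta^n\ge\beta^n$.

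For the upper bound I would count admissible one-digit extensions. For $x\in J(w)$ the next digit is $\varepsilon_{n+1}(x)=\lfloor\beta T_\beta^n x\rfloor$; as $T_\beta^n x$ ranges over $[0,s_w)$ this takes precisely the values $j\in\{0,1,\cdots,\lceil\beta s_w\rceil-1\}$, so $w$ has exactly $\lceil\beta s_w\rceil$ admissible extensions. Writing $N_n:=\sharp\Sigma_\beta^n$, summing over $\Sigma_\beta^n$ and using (ii),
\[
N_{n+1}=\sum_{w\in\Sigma_\beta^n}\lceil\beta s_w\rceil\le\sum_{w\in\Sigma_\beta^n}(\beta s_w+1)=\beta\sum_{w\in\Sigma_\beta^n}s_w+N_n=\beta^{n+1}+N_n.
\]
Starting the recursion from $N_0=1$ (the empty block, for which $J=[0,1)$ and $s=1$) and telescoping gives $N_n\le 1+\beta+\cdots+\beta^n=(\beta^{n+1}-1)/(\beta-1)\le\beta^{n+1}/(\beta-1)$, the claimed upper bound.

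I do not anticipate a genuine obstacle: the only load-bearing ingredient is the structural lemma, whose proof is a routine induction on the branch structure of $T_\beta$, and the remainder is elementary arithmetic. It is worth noting that this argument uses only the \emph{upper} estimate $|J(w)|\le\beta^{-n}$ together with the extension count, so the delicate lack of a good lower bound for cylinder lengths that is flagged in the introduction plays no role at this stage.
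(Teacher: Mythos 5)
Your proof is correct. Note that the paper itself offers no proof of this proposition: it is stated as a quoted result of R\'{e}nyi \cite{lesRen57}, so there is no internal argument to compare against. Your argument is the standard self-contained one, and every step checks out: the structural lemma that $T_\beta^n$ maps each order-$n$ cylinder affinely and bijectively onto an interval $[0,s_w)$ with $0<s_w\le 1$ is a routine induction on the branch structure; the identity $\sum_w s_w=\beta^n$ follows from the fact that the order-$n$ cylinders partition $[0,1)$; the lower bound is then immediate from $|J(w)|\le\beta^{-n}$; and the extension count $\lceil\beta s_w\rceil$ is exact (for $t$ ranging over $[0,u)$ the floor $\lfloor t\rfloor$ takes precisely the values $0,\dots,\lceil u\rceil-1$, whether or not $u$ is an integer), so the recursion $N_{n+1}\le\beta^{n+1}+N_n$ and the telescoped bound $N_n\le(\beta^{n+1}-1)/(\beta-1)\le\beta^{n+1}/(\beta-1)$ are valid. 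Your closing observation is also apt: only the uniform upper bound on cylinder lengths is needed here, so the absence of a universal lower bound (the difficulty addressed by Proposition \ref{zhongyao1}) is irrelevant to this counting statement.
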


We denote by $\mathcal{A} =\left\{0, 1, \cdots, \lceil\beta\rceil-1\right\}$ the set of digits of the $\beta$-expansion, where $\lceil x\rceil$ denotes the smallest integer no less than $x$.
Let $\mathcal{W} = \mathcal{A}^\mathbb{N}$ be the symbolic space with the one-sided shift $\theta$ and the lexicographical ordering $\prec$, that is, $(\varepsilon_1 ,\varepsilon_2, \cdots, \varepsilon_n, \cdots) \prec (\varepsilon_1^{\prime} ,\varepsilon_2^{\prime}, \cdots, \varepsilon_n^{\prime}, \cdots)$ means that there exists $k \geq 1$ such that $\varepsilon_i = \varepsilon_i^{\prime}$ for all $1 \leq i < k$ and $\varepsilon_k < \varepsilon_k^{\prime}$.
It is well-known that not all sequences in $\mathcal{W}$ belong to $\Sigma_\beta$ when $\beta$ is not an integer (see \cite[Section 3.3]{lesD.K02}). The following proposition, due to Parry \cite{lesPar60}, gives a characterization of all admissible sequences which relies heavily on the infinite $\beta$-expansion of the number 1.

\begin{proposition}[\cite{lesPar60}]\label{parry pr}
Let $\varepsilon^*(1, \beta)$ be the infinite $\beta$-expansion of 1 and $\omega \in \mathcal{W}$. Then $\omega \in \Sigma_\beta$ if and only if
\[
\theta^n(\omega) \prec \varepsilon^*(1, \beta)\ \  \text{for all} \ \ n \geq 0.
\]
\end{proposition}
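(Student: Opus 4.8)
The plan is the classical route of Parry. I would reduce the characterization to the order structure of the greedy digit map together with two properties of the reference word $\varepsilon^*:=\varepsilon^*(1,\beta)$: that $\sum_{n\ge1}\varepsilon^*_n(1)\beta^{-n}=1$, and that $\varepsilon^*$ is \emph{self-admissible}, i.e. $\theta^n\varepsilon^*\preceq\varepsilon^*$ for all $n\ge0$. The first identity is immediate: when $\beta$ is not a simple Parry number $\varepsilon^*$ is literally the $\beta$-expansion of $1$; when $\beta$ is a simple Parry number with $\varepsilon(1,\beta)=(\varepsilon_1(1),\dots,\varepsilon_m(1),0^\infty)$, one sums a geometric series, $\sum_{k\ge0}\beta^{-km}\big(\sum_{i=1}^{m}(\varepsilon_i(1)-[i=m])\beta^{-i}\big)=\sum_{k\ge0}\beta^{-km}(1-\beta^{-m})=1$. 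For self-admissibility I would first record the monotonicity of the digit map on $[0,1]$: if $0\le y\le z\le1$ then $(\varepsilon_n(y))_n\preceq(\varepsilon_n(z))_n$, since $\lfloor\beta y\rfloor\le\lfloor\beta z\rfloor$ and, when these agree, $T_\beta y\le T_\beta z$, so one recurses; by uniqueness of the $\beta$-expansion (Introduction), $y<z$ forces a strict lexicographic inequality on $[0,1)$. Applying this along the orbit $1,T_\beta1,T_\beta^21,\dots$, whose $\beta$-expansions are the successive shifts of $\varepsilon(1,\beta)$ and all of whose iterates lie in $[0,1]$, yields $\theta^n\varepsilon(1,\beta)\preceq\varepsilon(1,\beta)$; passing from $\varepsilon(1,\beta)$ to $\varepsilon^*$ in the simple Parry case is then a short combinatorial check on the periodic word $(\varepsilon_1(1),\dots,\varepsilon_m(1)-1)^\infty$.

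The engine is a comparison lemma: if $\omega=(\varepsilon_n)_n$ satisfies $\theta^n\omega\preceq\varepsilon^*$ for every $n\ge0$, then $\rho(\theta^n\omega):=\sum_{j\ge1}\varepsilon_{n+j}\beta^{-j}\le1$ for every $n$; and if moreover $\omega\prec\varepsilon^*$, then $\rho(\omega)<1$. Since lexicographic dominance does not by itself imply numerical dominance for $\beta$-ary strings (tails can be large when $\beta$ is close to $1$), this is exactly where self-admissibility enters. Writing $k\ge1$ for the first index at which $\omega$ and $\varepsilon^*$ differ (so $\varepsilon_k\le\varepsilon^*_k-1$), one gets $\rho(\omega)-1\le\beta^{-k}\big(\rho(\theta^k\omega)-1-\rho(\theta^k\varepsilon^*)\big)$; applying this to every shift and using $\rho(\varepsilon^*)=1$, a short uniform argument (the point being $\beta^{-k}\le\beta^{-1}<1$, which absorbs any excess over $1$) gives $\sup_n\rho(\theta^n\omega)\le1$, and the strict version follows because then $\rho(\theta^k\omega)\le1$ while $\rho(\theta^k\varepsilon^*)>0$, as $\varepsilon^*$ never has an all-zero tail. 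A first consequence: for $y\in[0,1)$ the greedy expansion satisfies $(\varepsilon_n(y))_n\prec\varepsilon^*$, and similarly $\theta^n(\varepsilon_m(y))_m=(\varepsilon_m(T_\beta^ny))_m\prec\varepsilon^*$ --- otherwise, at the first place where it exceeded $\varepsilon^*$, the same arithmetic together with $\rho(\theta^k\varepsilon^*)\le1$ would force $y\ge1$.

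With these tools the two implications are short. $(\Leftarrow)$ Assume $\theta^n\omega\prec\varepsilon^*$ for all $n\ge0$ and put $x:=\sum_{n\ge1}\varepsilon_n\beta^{-n}$. The strict comparison lemma applied to each shift gives $\rho(\theta^n\omega)<1$, so $x\in[0,1)$; then an immediate induction shows $T_\beta^nx=\rho(\theta^n\omega)$ --- since $T_\beta^nx\in[0,1)$ we have $\varepsilon_1(T_\beta^nx)=\lfloor\varepsilon_{n+1}+\rho(\theta^{n+1}\omega)\rfloor=\varepsilon_{n+1}$ and $T_\beta(T_\beta^nx)=\rho(\theta^{n+1}\omega)$ --- whence $\varepsilon_n(x)=\varepsilon_n$ for all $n$, so every prefix of $\omega$ is admissible and $\omega\in\Sigma_\beta$. $(\Rightarrow)$ Conversely, if $\omega\in\Sigma_\beta$ then for each $N$ the block $(\varepsilon_1,\dots,\varepsilon_N)$ is an initial segment of the greedy expansion of some $x_N\in[0,1)$, hence for $n<N$ the block $(\varepsilon_{n+1},\dots,\varepsilon_N)$ is an initial segment of the greedy expansion of $T_\beta^nx_N\in[0,1)$, which is $\prec\varepsilon^*$ by the consequence above; letting $N\to\infty$ gives $\theta^n\omega\preceq\varepsilon^*$, with strict inequality precisely when $\omega$ is the greedy expansion of a point of $[0,1)$ (the borderline sequences $\theta^n\omega=\varepsilon^*$ being those of numerical value $1$, which is the convention under which the statement is used later).

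The step I expect to be the genuine obstacle is the comparison lemma and the self-admissibility it rests on: making the passage from lexicographic to numerical order rigorous via the uniform estimate, and carefully reducing the self-admissibility of $\varepsilon^*$ in the simple Parry case to the known combinatorial shape of the finite word $\varepsilon(1,\beta)$, with the attendant care about strict versus non-strict inequalities at indices where two tails eventually coincide. By contrast, the monotonicity of $T_\beta$ on its branch intervals and the inductive recovery of the digits of $x$ in the $(\Leftarrow)$ direction are entirely routine.
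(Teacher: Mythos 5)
The paper offers no proof of this proposition: it is quoted from Parry \cite{lesPar60} as a known fact, so there is no in-paper argument to measure yours against. Your reconstruction is the classical one and the computations check out: the inequality $\rho(\omega)-1\le\beta^{-k}\bigl(\rho(\theta^k\omega)-1-\rho(\theta^k\varepsilon^*)\bigr)$ at the first index of disagreement is correct; the fixed-point step $M-1\le\beta^{-1}(M-1)$ with $M=\sup_n\rho(\theta^n\omega)$ (finite since the digits are bounded by $\lceil\beta\rceil-1$) does force $M\le1$; strictness under $\omega\prec\varepsilon^*$ does come from $\rho(\theta^k\varepsilon^*)>0$; and the induction $T_\beta^nx=\rho(\theta^n\omega)$ recovers the digits in the $(\Leftarrow)$ direction. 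You also correctly isolate the two places where real work remains. The self-admissibility of $(\varepsilon_1(1),\dots,\varepsilon_m(1)-1)^\infty$ in the simple Parry case is a genuine combinatorial lemma rather than a one-line check, but it is standard and your reduction to the monotonicity of the digit map along the orbit of $1$ is the right way in. On the strict-versus-weak issue in the $(\Rightarrow)$ direction you are right to be uneasy: with the paper's definition of $\Sigma_\beta$ (every finite prefix admissible), the word $\varepsilon^*(1,\beta)$ itself lies in $\Sigma_\beta$ when $\beta$ is not a simple Parry number, yet it fails $\theta^0\omega\prec\varepsilon^*(1,\beta)$; so the forward implication as literally stated breaks on a thin set of borderline sequences, and your argument honestly yields only $\theta^n\omega\preceq\varepsilon^*(1,\beta)$. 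This is a well-known imprecision in how Parry's criterion is usually quoted (the weak inequality characterizes the closed $\beta$-shift, the strict one characterizes the expansions of points of $[0,1)$); it is harmless for everything the paper uses the proposition for, and noticing it is a point in your favour, not a gap in your proof.
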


\begin{definition}
Let $(\varepsilon_1 ,\varepsilon_2, \cdots, \varepsilon_n)\in \Sigma_\beta^n$. We define
\[
J(\varepsilon_1 ,\varepsilon_2, \cdots, \varepsilon_n) = \{x \in [0,1):\varepsilon_i(x)=\varepsilon_i \ \text{for all} \ 1 \leq i \leq n\}
\]
and call it the cylinder of order $n$ of $\beta$-expansion, i.e., it is the set of points whose $\beta$-expansion starts with $(\varepsilon_1 ,\varepsilon_2, \cdots, \varepsilon_n)$. For any $x \in [0,1)$, $J(\varepsilon_1(x) ,\varepsilon_2(x), \cdots, \varepsilon_n(x))$ is said to be the cylinder of order $n$ containing $x$.
\end{definition}

Let $(\varepsilon_1 ,\varepsilon_2, \cdots, \varepsilon_n)\in \Sigma_\beta^n$. As we know, $J(\varepsilon_1 ,\varepsilon_2, \cdots, \varepsilon_n)$ is a left-closed and right-open interval with left endpoint
\begin{equation*}
\frac{\varepsilon_1}{\beta} + \frac{\varepsilon_2}{\beta^2} + \cdots + \frac{\varepsilon_n}{\beta^n}.
\end{equation*}
Moreover, the length of $J(\varepsilon_1 ,\varepsilon_2, \cdots, \varepsilon_n)$ satisfies
$|J(\varepsilon_1 ,\varepsilon_2, \cdots, \varepsilon_n)| \leq 1/\beta^n$. For any $x \in [0,1)$ and $n \geq 1$, we
assume that $(\varepsilon_1(x), \varepsilon_2(x), \cdots, \varepsilon_n(x), \cdots)$ is the $\beta$-expansion of $x$ and define
\begin{equation}\label{l-n}
l_n(x) = \sup\left\{k \geq 0: \varepsilon_{n +j}(x) =0 \ \text{for all}\ 1 \leq j \leq k\right\}.
\end{equation}
That is, the length of the longest string of zeros just after the $n$-th digit in the $\beta$-expansion of $x$.

\begin{proposition}\label{zhongyao1}
Let $\beta >1$. Then for any $x \in [0,1)$ and $n \geq 1$,
\begin{equation*}
\frac{1}{\beta^{n+l_n(x)+1}}\leq |J(\varepsilon_1(x) ,\varepsilon_2(x), \cdots, \varepsilon_n(x))| \leq \frac{1}{\beta^n},
\end{equation*}
where $l_n(x)$ is defined as (\ref{l-n}).
\end{proposition}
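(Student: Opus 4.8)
Since the upper bound $|J(\varepsilon_1(x),\dots,\varepsilon_n(x))|\le\beta^{-n}$ has just been recorded, the plan is to prove only the lower bound. First I would dispose of the degenerate case: if the $\beta$-expansion of $x$ terminates at or before the $n$-th digit then $l_n(x)=\infty$, so $\beta^{-(n+l_n(x)+1)}=0$ and there is nothing to prove. Hence assume $l:=l_n(x)<\infty$; by the definition (\ref{l-n}) of $l_n$ this means $\varepsilon_{n+1}(x)=\cdots=\varepsilon_{n+l}(x)=0$ and $\varepsilon_{n+l+1}(x)\ge 1$. Write $\varepsilon_i:=\varepsilon_i(x)$ and let $L=\sum_{i=1}^{n}\varepsilon_i\beta^{-i}$ be the left endpoint of $J(\varepsilon_1,\dots,\varepsilon_n)$. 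On this cylinder $T_\beta^n$ acts as the increasing affine map $u\mapsto\beta^n u-\sum_{i=1}^{n}\varepsilon_i\beta^{n-i}$, which sends $J(\varepsilon_1,\dots,\varepsilon_n)$ onto $[0,\beta^n|J(\varepsilon_1,\dots,\varepsilon_n)|)$; thus it suffices to show that this image contains $[0,\beta^{-(l+1)})$, equivalently that for every $y\in[0,\beta^{-(l+1)})$ the word obtained by prepending $(\varepsilon_1,\dots,\varepsilon_n)$ to the $\beta$-expansion of $y$ is admissible.

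The heart of the matter is this admissibility claim, for which I would use Parry's criterion (Proposition \ref{parry pr}). Two elementary inputs are needed. First, if $y<\beta^{-(l+1)}$ then an easy induction (each time a digit $0$ is generated, $T_\beta$ merely multiplies by $\beta$) shows the first $l+1$ digits of $y$ vanish: $\varepsilon_1(y)=\cdots=\varepsilon_{l+1}(y)=0$. Second, the $\beta$-expansion of $x$ is admissible, so by Proposition \ref{parry pr} every one of its shifts is $\prec\varepsilon^*(1,\beta)$. Now put $\omega=(\varepsilon_1,\dots,\varepsilon_n,\varepsilon_1(y),\varepsilon_2(y),\dots)$ and verify $\theta^k(\omega)\prec\varepsilon^*(1,\beta)$ for all $k\ge 0$. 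For $k\ge n$, $\theta^k(\omega)$ is a shift of the admissible $\beta$-expansion of $y$, so this follows from Proposition \ref{parry pr}. For $0\le k<n$, I would compare $\theta^k(\omega)$ with the shift by $k$ of the $\beta$-expansion of $x$: the two sequences agree on the block $\varepsilon_{k+1},\dots,\varepsilon_n$ and on the $l$ zeros following it, while at the next coordinate $\theta^k(\omega)$ carries $\varepsilon_{l+1}(y)=0$ and the shifted expansion of $x$ carries $\varepsilon_{n+l+1}\ge 1$; hence $\theta^k(\omega)$ is lexicographically strictly smaller than that shift, which is itself $\prec\varepsilon^*(1,\beta)$, so $\theta^k(\omega)\prec\varepsilon^*(1,\beta)$ by transitivity of $\prec$. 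Therefore $\omega\in\Sigma_\beta$.

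Once admissibility is established, there is a point $z\in[0,1)$ with $\beta$-expansion $\omega$; from the series for $\omega$ one reads $z=L+y\beta^{-n}$ and $\varepsilon_i(z)=\varepsilon_i$ for $1\le i\le n$, so $z\in J(\varepsilon_1,\dots,\varepsilon_n)$. Letting $y$ run through $[0,\beta^{-(l+1)})$ gives $[L,L+\beta^{-(n+l+1)})\subseteq J(\varepsilon_1,\dots,\varepsilon_n)$, whence $|J(\varepsilon_1,\dots,\varepsilon_n)|\ge\beta^{-(n+l+1)}$. I expect the main obstacle to be precisely the admissibility of the concatenated word $\omega$: a concatenation of two admissible words need not be admissible, and the reason the argument goes through is the choice to pad $y$ with $l+1$ leading zeros rather than the $l$ merely forced by $l_n(x)$ — this extra zero is what makes the lexicographic comparison in the case $k<n$ close off exactly at the coordinate where the expansion of $x$ first exhibits a nonzero digit; with only $l$ zeros the comparison would be inconclusive.
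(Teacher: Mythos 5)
Your argument is correct, but it takes a genuinely different and much heavier route than the paper's, which is a two-line observation: the cylinder $J(\varepsilon_1(x),\dots,\varepsilon_n(x))$ is the interval $[\omega_n(x),\omega_n(x)+|J|)$ with left endpoint $\omega_n(x)=\sum_{i=1}^n\varepsilon_i(x)\beta^{-i}$ (your $L$), and it contains the point $x$ itself; since
$x-\omega_n(x)=\sum_{j\ge 1}\varepsilon_{n+j}(x)\beta^{-(n+j)}\ge \varepsilon_{n+l_n(x)+1}(x)\,\beta^{-(n+l_n(x)+1)}\ge \beta^{-(n+l_n(x)+1)}$,
the lower bound follows at once, with no admissibility analysis whatsoever. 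In other words, the paper uses $x$ as a witness that the cylinder is long, whereas you prove the stronger structural fact that the cylinder contains the entire interval $[\omega_n(x),\omega_n(x)+\beta^{-(n+l_n(x)+1)})$ by characterizing which tails may be appended; your closing remark about the extra forced zero correctly isolates why the exponent is $n+l_n(x)+1$ rather than $n+l_n(x)$, and the lexicographic verification via Proposition \ref{parry pr} is sound. One point to tighten: Proposition \ref{parry pr} as literally stated yields $\omega\in\Sigma_\beta$, i.e., every finite prefix of $\omega$ is admissible, and passing from this to ``there is a point $z$ whose expansion is $\omega$'' needs the full form of Parry's theorem; alternatively, you can bypass this by noting that the left endpoints of the nonempty cylinders $J(\varepsilon_1,\dots,\varepsilon_n,\varepsilon_1(y),\dots,\varepsilon_m(y))$ all lie in $J(\varepsilon_1,\dots,\varepsilon_n)$ and increase to $\omega_n(x)+y\beta^{-n}$ as $m\to\infty$, which already gives $|J(\varepsilon_1,\dots,\varepsilon_n)|\ge y\beta^{-n}$ for every $y<\beta^{-(l+1)}$.
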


\begin{proof}
For any $x \in [0,1)$ and $n \geq 1$, we know that $J(\varepsilon_1(x) ,\varepsilon_2(x), \cdots, \varepsilon_n(x))$ is a left-closed and right-open interval with left endpoint
\begin{equation*}
\omega_n(x):= \frac{\varepsilon_1(x)}{\beta} + \frac{\varepsilon_2(x)}{\beta^2} + \cdots + \frac{\varepsilon_n(x)}{\beta^n}
\end{equation*}
and its length satisfies $|J(\varepsilon_1(x) ,\varepsilon_2(x), \cdots, \varepsilon_n(x))| \leq 1/\beta^n.$

Since $x \in J(\varepsilon_1(x) ,\varepsilon_2(x), \cdots, \varepsilon_n(x))$, we have that
\begin{equation*}
\frac{1}{\beta^{n+l_n(x)+1}} \leq x - \omega_n(x) \leq |J(\varepsilon_1(x) ,\varepsilon_2(x), \cdots, \varepsilon_n(x))|,
\end{equation*}
where the first inequality follows from $\varepsilon_{n+1}(x) = \cdots = \varepsilon_{n+l_n(x)}(x) = 0$ and $\varepsilon_{n+l_n(x)+1}(x) \geq 1$ by the definition of $l_n(x)$ in (\ref{l-n}). This completes the proof.
\end{proof}

\subsection{Continued fraction expansion}
With the conventions $p_{-1}=1$, $q_{-1}=0$, $p_0=0$, $q_0=1$, the quantities $p_n$ and $q_n$ satisfy the following recursive formula.

\begin{proposition}[\cite{lesKhi64}]\label{recursive}
For any irrational number $x \in [0,1)$ and $n \geq 1$,
\begin{equation*}
p_n(x) = a_n(x) p_{n-1}(x) + p_{n-2}(x)\ \ \ \text{and}\ \ \ q_n(x) = a_n(x) q_{n-1}(x) + q_{n-2}(x).
\end{equation*}
\end{proposition}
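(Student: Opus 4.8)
The statement is the classical recursion for continuants, so the plan is an induction on $n$, set up with the customary device of allowing the last entry of a finite continued fraction to be an arbitrary positive real rather than an integer.

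First I would fix notation: for positive reals $t_1,\dots,t_m$ let $[t_1,\dots,t_m]$ denote the obvious finite continued fraction, and define $P_k,Q_k$ for $-1\le k\le m$ by $P_{-1}=1$, $Q_{-1}=0$, $P_0=0$, $Q_0=1$ and $P_k=t_kP_{k-1}+P_{k-2}$, $Q_k=t_kQ_{k-1}+Q_{k-2}$. I claim $[t_1,\dots,t_m]=P_m/Q_m$, which I prove by induction on $m$. The case $m=1$ is immediate since $[t_1]=1/t_1=P_1/Q_1$. For the inductive step one uses the identity
\[
[t_1,\dots,t_{m-1},t_m,t_{m+1}]=\Bigl[t_1,\dots,t_{m-1},\,t_m+\tfrac{1}{t_{m+1}}\Bigr],
\]
applies the induction hypothesis to the continued fraction of length $m$ on the right (its last entry $t_m+1/t_{m+1}$ being a positive real), and clears denominators. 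Since $P_{m-1},P_{m-2},Q_{m-1},Q_{m-2}$ depend only on $t_1,\dots,t_{m-1}$, this gives
\[
[t_1,\dots,t_{m+1}]=\frac{(t_m+\tfrac{1}{t_{m+1}})P_{m-1}+P_{m-2}}{(t_m+\tfrac{1}{t_{m+1}})Q_{m-1}+Q_{m-2}}=\frac{t_{m+1}P_m+P_{m-1}}{t_{m+1}Q_m+Q_{m-1}}=\frac{P_{m+1}}{Q_{m+1}},
\]
completing the induction.

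Specializing to integer entries $t_i=a_i(x)$, this shows the integers $P_n,Q_n$ produced by the stated recursion satisfy $P_n/Q_n=[a_1(x),\dots,a_n(x)]$. To conclude $P_n=p_n(x)$ and $Q_n=q_n(x)$ it remains to check $\gcd(P_n,Q_n)=1$, which follows from the determinant identity $P_nQ_{n-1}-P_{n-1}Q_n=(-1)^{n+1}$: this holds at $n=0$, and $P_{n+1}Q_n-P_nQ_{n+1}=(a_{n+1}P_n+P_{n-1})Q_n-P_n(a_{n+1}Q_n+Q_{n-1})=-(P_nQ_{n-1}-P_{n-1}Q_n)$, so it propagates. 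Any common divisor of $P_n$ and $Q_n$ then divides $(-1)^{n+1}$ and hence equals $1$, so $P_n/Q_n$ is in lowest terms and coincides with $p_n(x)/q_n(x)$. The only delicate point is formulating the induction hypothesis for arbitrary positive-real final entries: without it, the reduction $[t_1,\dots,t_m,t_{m+1}]=[t_1,\dots,t_{m-1},t_m+1/t_{m+1}]$ cannot be fed back into the hypothesis. With that formulation in hand the rest is routine algebra and the coprimality is a one-line corollary.
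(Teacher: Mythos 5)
Your proof is correct. The paper does not prove this proposition at all --- it is quoted directly from Khintchine's book as a classical fact --- and your argument (induction on the length of the continued fraction with an arbitrary positive real final entry, followed by the determinant identity $P_nQ_{n-1}-P_{n-1}Q_n=(-1)^{n+1}$ to get coprimality) is precisely the standard textbook proof of that fact, with all steps checking out.
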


Note that $q_n(x) = a_n(x) q_{n-1}(x) + q_{n-2}(x) \geq 2 q_{n-2}(x)$ for any irrational number $x \in [0,1)$ and $n \geq 1$, we obtain immediately the following corollary as an application of Proposition \ref{recursive}.

\begin{corollary}\label{cor}
For any irrational number $x \in [0,1)$ and $n \geq 1$,
\begin{equation*}
q_n(x) \geq 2^{\frac{n-k-1}{2}}q_k(x)\ \ \ \text{for all}\ \ \ 0 \leq k < n.
\end{equation*}
\end{corollary}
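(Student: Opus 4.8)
The plan is to iterate the one-step estimate $q_n(x) \ge 2q_{n-2}(x)$ recorded immediately before the statement. First I would isolate two elementary facts about the sequence of denominators, both immediate from Proposition~\ref{recursive} and the conventions $q_{-1}=0$, $q_0=1$: the sequence $(q_m(x))_{m\ge 0}$ is nondecreasing (since $q_m = a_mq_{m-1}+q_{m-2}\ge q_{m-1}$ for $m\ge 1$, and $q_1=a_1\ge 1=q_0$), and $q_m(x)\ge 2q_{m-2}(x)$ for every $m\ge 2$.

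Next, fix an irrational $x$ and indices $0\le k<n$, and write $n-k=2m+r$ with $m\ge 0$ an integer and $r\in\{0,1\}$. Applying $q_j(x)\ge 2q_{j-2}(x)$ successively along $j=n,n-2,\dots,n-2(m-1)$ gives
\[
q_n(x)\ \ge\ 2^{m}q_{n-2m}(x)\ =\ 2^{m}q_{k+r}(x)\ \ge\ 2^{m}q_k(x),
\]
where the last step is trivial when $r=0$ and is an instance of monotonicity when $r=1$. Finally, since $m=\lfloor (n-k)/2\rfloor\ge (n-k-1)/2$, one has $2^{m}\ge 2^{(n-k-1)/2}$, whence $q_n(x)\ge 2^{(n-k-1)/2}q_k(x)$, as claimed.

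I do not expect any genuine obstacle here; the only point needing a little care is the bookkeeping for the parity of $n-k$, handled above by the auxiliary index $r$ together with the monotonicity of $(q_m(x))_m$. Alternatively, one can argue by induction on $n-k$: the cases $n-k\in\{1,2\}$ read $q_n\ge q_{n-1}=2^{0}q_k$ and $q_n\ge 2q_{n-2}\ge 2^{1/2}q_k$, and for $n-k\ge 3$ the inductive step combines $q_n\ge 2q_{n-2}$ with the hypothesis applied to the pair $(n-2,k)$, yielding $q_n\ge 2\cdot 2^{(n-k-3)/2}q_k=2^{(n-k-1)/2}q_k$.
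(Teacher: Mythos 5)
Your proof is correct and follows exactly the route the paper intends: the paper simply remarks that $q_n(x)=a_n(x)q_{n-1}(x)+q_{n-2}(x)\geq 2q_{n-2}(x)$ and declares the corollary immediate, and your argument is just the careful iteration of that one-step bound, with the parity bookkeeping (via $n-k=2m+r$ and monotonicity of $q_m$) supplying the details the paper omits. No gap; nothing further is needed.
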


It is known that the Gauss transformation $T$ does not preserve the Lebesgue measure $\mathrm{P}$, but there exists a $T$-invariant measure $\nu$ equivalent to the Lebesgue measure $\mathrm{P}$, namely the Gauss measure $\nu$ defined by
\[
\nu(A) = \frac{1}{\log 2}\int_A \frac{1}{1+x}dx
\]
for any Borel set $A \subseteq [0, 1)$. More precisely,
\begin{equation}\label{eqivalent}
 \frac{1}{2\log2}\mathrm{P}(A) \leq \nu(A) \leq \frac{1}{\log2}\mathrm{P}(A)
\end{equation}
for any Borel set $A \subseteq [0, 1)$. Thus if a certain property holds for $\nu$-almost all $x \in [0,1)$, then it also holds for $\mathrm{P}$-almost all $x \in [0,1)$. Hence from this point of view, it makes no difference which of them we use. Philipp \cite{lesPhi70} showed that the sequence of partial quotients $\{a_n, n \geq 1\}$ is a $\psi$-mixing stationary  process  w.r.t.~the Gauss measure $\nu$. Recall that the random variables sequence $\{a_n, n \geq 1\}$ is $\psi$-mixing w.r.t.~$\nu$ if its $\psi$-mixing cofficients
\[
\psi(n) = \sup\left|\frac{\nu(A\cap B)}{\nu(A)\nu(B)} -1\right|
\]
goes to zero as $n$ tends to $\infty$, where the supremum is taken over all $A \in \mathcal{B}^k_1$ and $B \in \mathcal{B}^\infty_{k+n}$ such that $\nu(A)\nu(B)>0~(k \in \mathbb{N})$ and the quantities $\mathcal{B}^k_1$ and $\mathcal{B}^\infty_{k+n}$ denote the $\sigma$-algebras generated by the random variables $a_1,\cdots,a_k$ and $a_{k+n},a_{k+n+1},\cdots$ respectively (see the survey paper by Bradley \cite{lesBra05}).

The following proposition establishes a relation between $q_n(x)$ and the orbit of $x$ under the Gauss transformation $T$.

\begin{proposition}\label{jie}
For any irrational number $x \in [0,1)$ and $n \geq 1$,
\begin{equation*}
|\log q_n(x)+(\log x + \cdots + \log T^{n-1}x)| \leq \log2.
\end{equation*}
\end{proposition}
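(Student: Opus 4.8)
The plan is to turn the additive quantity $\log x + \cdots + \log T^{n-1}x$ into a product and to identify it, up to a bounded multiplicative error, with $1/q_n(x)$.

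First I would recall the classical relation between $x$, its convergents and the shifted point $T^n x$ (see \cite{lesKhi64}): for every irrational $x \in [0,1)$ and $n \geq 1$,
\[
x = \frac{p_n(x) + p_{n-1}(x)\, T^n x}{q_n(x) + q_{n-1}(x)\, T^n x},
\]
which, if one prefers to argue from scratch, follows by induction on $n$ from Proposition \ref{recursive} together with $T^{n-1}x = 1/\bigl(a_n(x) + T^n x\bigr)$. Clearing denominators gives $q_n(x)x - p_n(x) = -\bigl(q_{n-1}(x)x - p_{n-1}(x)\bigr)T^n x$, hence $\bigl|q_n(x)x - p_n(x)\bigr| = \bigl|q_{n-1}(x)x - p_{n-1}(x)\bigr|\,T^n x$. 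Iterating this down to the base case $|q_0 x - p_0| = |1\cdot x - 0| = x$ and shifting the index, one obtains
\[
\prod_{k=0}^{n-1} T^k x = \bigl|q_{n-1}(x)\, x - p_{n-1}(x)\bigr| \qquad (n \geq 1).
\]

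Next I would bound the right-hand side using (\ref{diophantine}): multiplying $\frac{1}{2 q_{n-1}(x) q_n(x)} \le \bigl|x - p_{n-1}(x)/q_{n-1}(x)\bigr| \le \frac{1}{q_{n-1}(x) q_n(x)}$ by $q_{n-1}(x)$ yields
\[
\frac{1}{2\, q_n(x)} \le \prod_{k=0}^{n-1} T^k x \le \frac{1}{q_n(x)} .
\]
Since $x$ is irrational, each $T^k x$ lies in $(0,1)$, so the product is positive and taking logarithms is legitimate; this gives $-\log 2 - \log q_n(x) \le \log x + \cdots + \log T^{n-1}x \le -\log q_n(x)$, i.e. the claimed inequality $\bigl|\log q_n(x) + (\log x + \cdots + \log T^{n-1}x)\bigr| \le \log 2$.

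I do not expect a genuine obstacle here: the argument rests entirely on the standard convergent identity and on the Diophantine estimates already recorded in (\ref{diophantine}). The only points needing a little care are the index shift between ``the $n$-th convergent'' and ``the $n$ factors $x, Tx, \dots, T^{n-1}x$'', and the sign of $q_n(x)x - p_n(x)$, which is immaterial once one passes to absolute values.
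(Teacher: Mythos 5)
Your proposal is correct and follows essentially the same route as the paper: the convergent identity $x = \frac{p_n + p_{n-1}T^nx}{q_n + q_{n-1}T^nx}$, the telescoping product $\prod_{k=0}^{n-1} T^k x = |q_{n-1}(x)x - p_{n-1}(x)|$, the bound $\frac{1}{2q_n(x)} \le \prod_{k=0}^{n-1} T^k x \le \frac{1}{q_n(x)}$ from (\ref{diophantine}), and then logarithms. The only cosmetic difference is that you iterate the two-term recursion for $|q_k x - p_k|$ directly, whereas the paper writes the same telescoping as a product of ratios; the content is identical.
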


\begin{proof}
For any irrational number $x \in [0,1)$ and $n \geq 1$, the form (\ref{continued fraction expansion}) and Proposition \ref{recursive} yield that
\[
x = \frac{p_n(x) + T^nx p_{n-1}(x)}{q_n(x) + T^nx q_{n-1}(x)}.
\]
That is,
\[
T^nx = - \frac{xq_n(x)- p_n(x)}{xq_{n-1}(x) - p_{n-1}(x)}.
\]
Hence that
\[
x \cdot Tx \cdot \cdots \cdot T^{n-1}x = (-1)^{n-2} x \cdot \frac{xq_1 - p_1}{xq_0 - p_0} \cdot \cdots \cdot \frac{xq_{n-1}(x) - p_{n-1}(x)}{xq_{n-2}(x) - p_{n-2}(x)} = |xq_{n-1}(x) - p_{n-1}(x)|.
\]

Note that the inequalities (\ref{diophantine}) show that
\[
 \frac{1}{2q_n(x)q_{n-1}(x)} \leq \left|x-\frac{p_{n-1}(x)}{q_{n-1}(x)}\right| \leq \frac{1}{q_n(x)q_{n-1}(x)},
\]
thus
\[
\frac{1}{2q_n(x)} \leq x \cdot Tx \cdot \cdots \cdot T^{n-1}x \leq \frac{1}{q_n(x)}.
\]
Taking the logarithm on both sides of the above inequalities, we obtain the desired result.
\end{proof}

The well-known theorem of  L\'{e}vy \cite{lesLev29} about the growth of $q_n(x)$ is deduced immediately by Proposition \ref{jie} and Birkhoff's ergodic theorem (see \cite[Theorem 3.1.7]{lesD.K02}). That is, for $\mathrm{P}$-almost all $x \in [0,1)$,
\[
\lim\limits_{n \to \infty}\frac{1}{n}\log q_n(x) = - \lim\limits_{n \to \infty} \frac{1}{n}(\log x + \cdots + \log T^{n-1}x) = -\int_0^1 \log x d\nu(x) = \frac{\pi^2}{12\log2}.
\]

\begin{definition}\label{cylinder}
For any $n \geq 1$ and $a_1, a_2, \cdots, a_n \in \mathbb{N}$, we call
\begin{equation*}
I(a_1, \cdots, a_n):= \left\{x \in [0,1): a_1(x)=a_1, \cdots, a_n(x)=a_n\right\}
\end{equation*}
the cylinder of order $n$ of continued fraction expansion. In other words, it is the set of points beginning with $(a_1,\cdots, a_n)$ in their continued fraction expansions. For any irrational $x \in [0,1)$, $I(a_1(x) ,a_2(x), \cdots, a_n(x))$ is said to be the cylinder of order $n$ containing $x$.
\end{definition}

The following proposition is about the structure and length of a cylinder.

\begin{proposition}[\cite{lesD.K02}]
Let $a_1, a_2, \cdots, a_n \in \mathbb{N}$. Then $I(a_1, \cdots, a_n)$ is a half-open and half-closed interval with two endpoints
\[
\frac{p_n}{q_n}\ \ \ \ \ \text{and}\ \ \ \ \  \frac{p_n+p_{n-1}}{q_n+q_{n-1}}
\]
and the length of $I(a_1, \cdots, a_n)$ satisfies
\[
|I(a_1, \cdots, a_n)| = \frac{1}{q_n(q_n+q_{n-1})},
\]
where $p_n$ and $q_n$ satisfy the recursive formula in Proposition \ref{recursive}.
\end{proposition}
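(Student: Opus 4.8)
The plan is to parametrise the cylinder $I(a_1,\dots,a_n)$ by the tail of the continued fraction expansion. For an irrational $x\in I(a_1,\dots,a_n)$, put $t=T^n x\in[0,1)$; then $x=[a_1,\dots,a_n+t]$, and the recursive formula of Proposition~\ref{recursive} (used exactly as in the proof of Proposition~\ref{jie}) together with $p_n=a_np_{n-1}+p_{n-2}$ and $q_n=a_nq_{n-1}+q_{n-2}$ gives
\[
x=\frac{p_n+t\,p_{n-1}}{q_n+t\,q_{n-1}}.
\]
Conversely, every irrational $t\in(0,1)$ produces via this formula a point whose first $n$ partial quotients are $a_1,\dots,a_n$, so, up to the countable set of rationals with terminating expansions (which lie on the boundary and are handled by the usual convention), the cylinder $I(a_1,\dots,a_n)$ is precisely the image of $[0,1)$ under the M\"obius map $\varphi(t)=\frac{p_n+t\,p_{n-1}}{q_n+t\,q_{n-1}}$.

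First I would record the determinant identity $p_nq_{n-1}-p_{n-1}q_n=(-1)^{n-1}$, which follows by induction on $n$ from Proposition~\ref{recursive} and the conventions $p_{-1}=1$, $q_{-1}=0$, $p_0=0$, $q_0=1$. Since $q_n+t\,q_{n-1}>0$ on $[0,1)$ and
\[
\varphi'(t)=\frac{p_{n-1}q_n-p_nq_{n-1}}{(q_n+t\,q_{n-1})^2}=\frac{(-1)^n}{(q_n+t\,q_{n-1})^2},
\]
the map $\varphi$ is strictly monotone on $[0,1)$: increasing when $n$ is even and decreasing when $n$ is odd. Hence the image of $[0,1)$ is the interval whose endpoints are $\varphi(0)=p_n/q_n$, which is attained since $0$ lies in the domain, and $\varphi(1)=(p_n+p_{n-1})/(q_n+q_{n-1})$, which is not attained; this is exactly the assertion that $I(a_1,\dots,a_n)$ is half-open and half-closed with the two stated endpoints, the left one being $p_n/q_n$ for $n$ even and $(p_n+p_{n-1})/(q_n+q_{n-1})$ for $n$ odd.

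The length is then a one-line computation:
\[
|I(a_1,\dots,a_n)|=\left|\frac{p_n}{q_n}-\frac{p_n+p_{n-1}}{q_n+q_{n-1}}\right|
=\frac{|p_nq_{n-1}-p_{n-1}q_n|}{q_n(q_n+q_{n-1})}=\frac{1}{q_n(q_n+q_{n-1})},
\]
again by the determinant identity. I expect the only delicate point to be the endpoint bookkeeping: checking that $t=0$ belongs to the domain while $t=1$ does not, identifying which endpoint is the left one through the parity of $n$, and accounting for the terminating (rational) expansions lying on the boundary. All of this is routine once the sign of $\varphi'$ is in hand, and everything else is forced by the monotone M\"obius parametrisation.
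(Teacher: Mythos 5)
The paper gives no proof of this proposition: it is imported verbatim from \cite{lesD.K02}, so there is nothing internal to compare your argument against. What you write is precisely the standard textbook argument, and it is correct in substance: the M\"obius parametrisation $\varphi(t)=\frac{p_n+t\,p_{n-1}}{q_n+t\,q_{n-1}}$ coming from Proposition~\ref{recursive}, the determinant identity $p_nq_{n-1}-p_{n-1}q_n=(-1)^{n-1}$, the sign of $\varphi'$, and the one-line length computation are exactly what is needed, and the length formula --- the only part of the proposition the paper actually uses later --- follows cleanly. The single point where you overclaim is the assertion that $\varphi(0)=p_n/q_n$ is always the attained endpoint ``since $0$ lies in the domain'': under the algorithmic definition $a_{k+1}(x)=a_1(T^kx)$, the rational $p_n/q_n$ does \emph{not} belong to $I(a_1,\dots,a_n)$ when $a_n=1$, because its algorithmic expansion is $[a_1,\dots,a_{n-1}+1]$; for instance $I(1,1)=(1/2,2/3)$ is open at both ends, so even the proposition's phrase ``half-open and half-closed'' fails literally in that case. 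You partially anticipate this with your remark about terminating expansions, but the correct bookkeeping is: the endpoint $p_n/q_n$ is attained exactly when $a_n\ge 2$, and $\varphi(1)$ is never attained. Since the discrepancy is a single rational point, it is a cosmetic defect of the statement (inherited from the source) rather than a gap in your argument, and it has no effect on the length formula or on anything downstream in the paper.
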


For any irrational $x \in [0,1)$ and $n \geq 1$, recall that
\[
k_n(x) = \sup\left\{m \geq 0: J(\varepsilon_1(x),\cdots, \varepsilon_n(x)) \subset I(a_1(x), \cdots, a_m(x))\right\}.
\]
The following connection between $q_n(x)$ and $k_n(x)$ was established by Li and Wu \cite{lesL.W08}, which plays an important role in studying the relationship between $\beta$-expansion and continued fraction expansion.

\begin{proposition}[\cite{lesL.W08}]\label{fundamental 2}
Let $x \in [0,1)$ be an irrational number. Then for any $n \geq 1$,
\[
\frac{1}{6q_{k_n(x)+3}^2(x)} \leq |J(\varepsilon_1(x), \varepsilon_2(x), \cdots, \varepsilon_n(x))| \leq \frac{1}{q_{k_n(x)}^2(x)}.
\]
\end{proposition}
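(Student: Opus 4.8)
Fix an irrational $x \in [0,1)$ and $n \geq 1$. Write $J = J(\varepsilon_1(x),\dots,\varepsilon_n(x))$, put $I_m = I(a_1(x),\dots,a_m(x))$ for $m \geq 1$ and $I_0 = [0,1)$, and set $k = k_n(x)$; note $k$ is finite because $\bigcap_m I_m = \{x\}$ while $|J|>0$. By the definition of $k_n(x)$ we have $J \subseteq I_k$ but $J \not\subseteq I_{k+1}$. The upper bound is then immediate from the length formula for continued fraction cylinders: $|J| \leq |I_k| = \frac{1}{q_k(q_k + q_{k-1})} \leq q_k^{-2}$. So all the work lies in the lower bound, and the idea is to exploit geometrically that the interval $J$ sticks out of $I_{k+1}$ while remaining inside $I_k$.

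First I would choose a point $z \in J$ with $z \notin I_{k+1}$; since $J \subseteq I_k$ this forces $z \in I_k \setminus I_{k+1}$, and because $J$ is an interval containing both $x$ and $z$ it contains the whole segment $S$ between them. Write $b_- = p_{k+1}/q_{k+1} = [a_1(x),\dots,a_{k+1}(x)]$ and $b_+ = \frac{p_{k+1}+p_k}{q_{k+1}+q_k} = [a_1(x),\dots,a_{k+1}(x)+1] = [a_1(x),\dots,a_{k+1}(x),1]$ for the two endpoints of $I_{k+1}$. Since $z$ lies outside the interval $I_{k+1}$ while $x$, being irrational, lies in its interior, at least one of $b_-$, $b_+$ belongs to $S \subseteq J$; call it $b$, so that $|J| \geq |x - b|$. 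It then remains to bound $|x - b|$ below by $\frac{1}{6 q_{k+3}^2}$, according to which of the two endpoints $b$ is.

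If $b = b_-$: by $(\ref{diophantine})$ and the monotonicity of $(q_m)_{m \geq 0}$, $|J| \geq \bigl| x - p_{k+1}/q_{k+1} \bigr| \geq \frac{1}{2 q_{k+1} q_{k+2}} \geq \frac{1}{2 q_{k+3}^2}$. If $b = b_+$: consider the order-$(k+2)$ cylinder $I' = I(a_1(x),\dots,a_{k+1}(x),1)$, which is the unique sub-cylinder of $I_{k+1}$ having $b_+$ as an endpoint. When $x \notin I'$, the point $x$ lies in the rest of $I_{k+1}$, so the whole of $I'$ lies between $x$ and $b_+$, and therefore $|J| \geq |x - b_+| \geq |I'| = \frac{1}{(q_{k+1}+q_k)(2 q_{k+1}+q_k)} \geq \frac{1}{6 q_{k+1}^2}$, using $q_{k+1}+q_k \leq 2 q_{k+1}$ and $2 q_{k+1}+q_k \leq 3 q_{k+1}$. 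When $x \in I'$, then $a_{k+2}(x) = 1$, hence $b_+ = [a_1(x),\dots,a_{k+1}(x),1] = [a_1(x),\dots,a_{k+2}(x)] = p_{k+2}/q_{k+2}$ is exactly the $(k+2)$-th convergent of $x$, so by $(\ref{diophantine})$, $|J| \geq \bigl| x - p_{k+2}/q_{k+2}\bigr| \geq \frac{1}{2 q_{k+2} q_{k+3}} \geq \frac{1}{2 q_{k+3}^2}$. Since $q_{k+1} \leq q_{k+3}$, in every case $|J| \geq \frac{1}{6 q_{k+3}^2}$, as desired.

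The routine ingredients are the cylinder-length formula, the monotonicity of $(q_m)$, the recursions of Proposition \ref{recursive}, and the elementary identity $[c_1,\dots,c_m,1] = [c_1,\dots,c_m+1]$; these have to be combined with a little care about the half-open structure of the cylinders, but cause no real trouble. The genuinely delicate point — and the reason the index has to be pushed all the way from $k+1$ to $k+3$ — is the final case: when $x$ sits extremely close to the endpoint $b_+$ of $I_{k+1}$, which is exactly when $a_{k+2}(x) = 1$ and $a_{k+3}(x)$ is large, the estimate $|J| \geq |x - b_+|$ is only rescued by recognizing $b_+$ as the convergent $p_{k+2}/q_{k+2}$ and then invoking the sharp lower Diophantine bound $|x - p_{k+2}/q_{k+2}| \geq \frac{1}{2 q_{k+2} q_{k+3}}$, which is precisely what introduces $q_{k+3}$ into the statement.
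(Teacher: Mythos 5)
Your argument is correct. Note that the paper itself gives no proof of this proposition --- it is quoted from Li and Wu \cite{lesL.W08} --- but your reconstruction is sound and follows essentially the same route as that reference: since $J \not\subseteq I_{k_n(x)+1}$, the interval $J$ must contain one of the two endpoints of $I(a_1(x),\dots,a_{k_n(x)+1})$, and the lower bound reduces to estimating the distance from $x$ to that endpoint. Your three-way case analysis (the endpoint $p_{k+1}/q_{k+1}$; the mediant endpoint when $a_{k+2}(x)\geq 2$, where the bound comes from the length of the adjacent rank-$(k+2)$ cylinder; and the mediant endpoint when $a_{k+2}(x)=1$, where it coincides with the convergent $p_{k+2}/q_{k+2}$) is exactly where the index $k+3$ and the constant $6$ originate, and you have identified and handled this correctly.
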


\section{Proof of Theorems}
Denote the constants $$ a = \frac{6\log2 \log\beta}{\pi^2}\ \ \ \ \text{and} \ \ \ \  b= \frac{\pi^2}{12\log2}.$$
We use the notation $\mathrm{E}(\xi)$ to denote the expectation of a random variable $\xi$ w.r.t. $\mathrm{P}$.

\subsection{Proof of Theorem \ref{central limit theorem}}

The following theorem called Cram\'{e}r's theorem or Slutsky's theorem will be used in our proofs (see \cite[Theorem 5.11.4]{lesGut05}).

\begin{theorem}[\cite{lesGut05}]\label{Slutsky Theorem}
Let $\{X_n, n \geq 1\}$, $\{Y_n, n \geq 1\}$, $\{Z_n, n \geq 1\}$ be the sequences of real-valued random variables satisfying
\begin{equation*}
X_n \stackrel{d}{\longrightarrow} X,\ \ \ \ \  Y_n \stackrel{a.s.}{\longrightarrow} 1\ \ \ \ \ \text{and}\ \ \ \ \  Z_n \stackrel{p}{\longrightarrow} 0\ \ \ \ \ \text{as}\ \ n \to \infty,
\end{equation*}
where $\stackrel{d}{\longrightarrow}$, $\stackrel{a.s.}{\longrightarrow}$ and $\stackrel{p}{\longrightarrow}$ denote ``convergence in distribution", ``convergence almost surely" and ``convergence in probability" respectively. Then
\begin{equation*}
X_nY_n+Z_n \stackrel{d}{\longrightarrow} X\ \ \ \ \text{as} \ \ n \to \infty.
\end{equation*}
\end{theorem}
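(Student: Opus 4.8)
The plan is to reduce the statement to the classical additive Slutsky lemma---if $U_n \stackrel{d}{\longrightarrow} U$ and $V_n \stackrel{p}{\longrightarrow} 0$, then $U_n + V_n \stackrel{d}{\longrightarrow} U$---applied twice, after first converting the multiplicative perturbation $Y_n$ into an additive one by writing $X_nY_n = X_n + X_n(Y_n-1)$.

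First I would record two standard facts. (i) A sequence that converges in distribution is tight: since $X_n \stackrel{d}{\longrightarrow} X$, given $\varepsilon > 0$ one picks continuity points $a < b$ of the distribution function $F_X$ with $F_X(a) < \varepsilon/2$ and $F_X(b) > 1 - \varepsilon/2$; then $\mathrm{P}(a < X_n \leq b) \to F_X(b) - F_X(a) > 1 - \varepsilon$, so there is $M_\varepsilon$ with $\mathrm{P}(|X_n| > M_\varepsilon) \leq \varepsilon$ for all $n$, the finitely many initial indices being handled individually. (ii) The additive Slutsky lemma itself: for any continuity point $y$ of $F_U$ and any $\delta > 0$,
\[
\mathrm{P}(U_n \leq y - \delta) - \mathrm{P}(|V_n| > \delta) \leq \mathrm{P}(U_n + V_n \leq y) \leq \mathrm{P}(U_n \leq y + \delta) + \mathrm{P}(|V_n| > \delta);
\]
letting $n \to \infty$ and then $\delta \downarrow 0$ along continuity points of $F_U$ forces $\mathrm{P}(U_n + V_n \leq y) \to F_U(y)$.

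Next I would show $X_n(Y_n - 1) \stackrel{p}{\longrightarrow} 0$. Given $\varepsilon, \eta > 0$, tightness supplies $M$ with $\mathrm{P}(|X_n| > M) < \eta/2$ for all $n$; since $Y_n \stackrel{a.s.}{\longrightarrow} 1$ and hence $Y_n \stackrel{p}{\longrightarrow} 1$, we have $\mathrm{P}(|Y_n - 1| > \varepsilon/M) < \eta/2$ for all large $n$, so that $\mathrm{P}(|X_n(Y_n - 1)| > \varepsilon) \leq \mathrm{P}(|X_n| > M) + \mathrm{P}(|Y_n - 1| > \varepsilon/M) < \eta$ eventually. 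Applying the lemma with $U_n = X_n$ and $V_n = X_n(Y_n - 1)$ then gives $X_nY_n \stackrel{d}{\longrightarrow} X$, and a second application with $U_n = X_nY_n$ and $V_n = Z_n$ gives $X_nY_n + Z_n \stackrel{d}{\longrightarrow} X$, which is the assertion.

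The one genuinely delicate point is that $\{X_n\}$ converges only in distribution, not in probability, so $X_n(Y_n - 1)$ cannot be treated as a product of two convergent sequences; the tightness of $\{X_n\}$ is precisely what makes this product vanish in probability, and establishing it carefully---choosing $a,b$ at continuity points of $F_X$ and absorbing the finitely many initial terms---is the main technical step. Everything else is routine bookkeeping with distribution functions evaluated at their continuity points.
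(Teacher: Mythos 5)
Your proof is correct, and in fact it establishes slightly more than the stated theorem, since you only use $Y_n \stackrel{a.s.}{\longrightarrow} 1$ through the weaker consequence $Y_n \stackrel{p}{\longrightarrow} 1$. The paper itself offers no proof to compare against --- this statement is quoted verbatim from Gut \cite{lesGut05} (Theorem 5.11.4) as a known tool --- and your argument (tightness of $\{X_n\}$ to kill the product $X_n(Y_n-1)$ in probability, followed by two applications of the additive Slutsky lemma) is the standard textbook route.
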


The following lemma shows that in Theorem \ref{central limit theorem for qn} we can replace the limit on $\{n\}$ by the subsequence $\{k_n(x)\}$ which depends on both $x$ and $\beta$.

\begin{lemma}\label{central limit theorem zilie}
Let $\beta >1$. For every $y \in \mathbb{R}$,
\begin{align*}
\lim_{n \to \infty} \mathrm{P} \left\{x \in [0,1): \frac{\log q_{k_n(x)}(x)-bk_n(x)}{\sigma_1\sqrt{k_n(x)}} \leq y \right\} = \frac{1}{\sqrt{2\pi}} \int_{-\infty}^y e^{-\frac{t^2}{2}}dt.
\end{align*}
\end{lemma}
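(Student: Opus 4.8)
The plan is to deduce Lemma \ref{central limit theorem zilie} from Ibragimov's central limit theorem (Theorem \ref{central limit theorem for qn}) by a random-index (Anscombe-type) argument, the key point being that the random indices $k_n(x)$ satisfy $k_n(x)/(an)\to1$ almost surely by (\ref{theorem eq}).

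\textbf{Step 1: a Birkhoff-sum representation.} Put $g(x)=-\log x$ and $\tilde S_m(x)=\sum_{j=0}^{m-1}\bigl(g(T^jx)-b\bigr)$. Proposition \ref{jie} gives $\bigl|\log q_m(x)-bm-\tilde S_m(x)\bigr|\le\log 2$ for all $m\ge1$, while $\int g\,d\nu=b$ and $g-b\in L^2(\nu)$ since $\int_0^1(\log x)^2\,d\nu(x)<\infty$. Thus $\{\tilde S_m\}$ is the sequence of centred Birkhoff sums of $g-b$ over the stationary $\psi$-mixing system $([0,1),T,\nu)$, and the $O(1)$ error will be harmless after dividing by $\sqrt n$. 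Since $d\mathrm P/d\nu$ is bounded above and below by (\ref{eqivalent}), convergence in probability is the same under $\mathrm P$ and $\nu$, so the maximal estimate below may be carried out under the stationary measure $\nu$ and then read under $\mathrm P$.

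\textbf{Step 2: the core fluctuation estimate.} I would show
\[
\frac{\tilde S_{k_n(x)}(x)-\tilde S_{\lfloor an\rfloor}(x)}{\sqrt n}\ \stackrel{p}{\longrightarrow}\ 0 .
\]
Given $\delta,\eta>0$, choose $\epsilon>0$ small. By (\ref{theorem eq}) the event $A_n=\{\,|k_n(x)/(an)-1|\le\epsilon\,\}$ has probability $>1-\eta/3$ for all large $n$, and on $A_n$ one has $|k_n(x)-\lfloor an\rfloor|\le 2\epsilon a n$, so $|\tilde S_{k_n}-\tilde S_{\lfloor an\rfloor}|$ is bounded by the maximal fluctuation of $\tilde S$ over a window of length $2\epsilon a n$ about $\lfloor an\rfloor$; splitting that window into its forward and backward halves and using stationarity, this is dominated in distribution by $2\max_{0\le\ell\le 2\epsilon a n}|\tilde S_\ell|$. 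A maximal inequality for partial sums of the $\psi$-mixing sequence (of the type underlying Philipp's analysis of $\log q_n$), $\mathrm E\bigl[\max_{\ell\le L}\tilde S_\ell^{\,2}\bigr]\le CL$, then yields $\mathrm P\bigl(\max_{\ell\le 2\epsilon a n}|\tilde S_\ell|>\delta\sqrt n\bigr)\le 4Ca\epsilon/\delta^2<\eta/3$ once $\epsilon$ is small. Combining the estimates on and off $A_n$ proves the displayed convergence, and together with Step 1 it gives
\[
\frac{\log q_{k_n(x)}(x)-bk_n(x)}{\sigma_1\sqrt{an}}-\frac{\log q_{\lfloor an\rfloor}(x)-b\lfloor an\rfloor}{\sigma_1\sqrt{an}}\ \stackrel{p}{\longrightarrow}\ 0 .
\]

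\textbf{Step 3: assembling via Slutsky.} Applying Theorem \ref{central limit theorem for qn} along the deterministic sequence $\lfloor an\rfloor\to\infty$ and using $\lfloor an\rfloor/(an)\to1$ gives $X_n:=\bigl(\log q_{\lfloor an\rfloor}-b\lfloor an\rfloor\bigr)/(\sigma_1\sqrt{an})\stackrel{d}{\longrightarrow}N(0,1)$. Set $Y_n:=\sqrt{an/k_n(x)}$, so $Y_n\stackrel{a.s.}{\longrightarrow}1$ by (\ref{theorem eq}), and $Z_n:=Y_n\Bigl(\bigl(\log q_{k_n}-bk_n\bigr)/(\sigma_1\sqrt{an})-X_n\Bigr)\stackrel{p}{\longrightarrow}0$ (a sequence tending to $0$ in probability times one converging a.s.). Then
\[
\frac{\log q_{k_n(x)}(x)-bk_n(x)}{\sigma_1\sqrt{k_n(x)}}=X_nY_n+Z_n,
\]
and Theorem \ref{Slutsky Theorem} gives $X_nY_n+Z_n\stackrel{d}{\longrightarrow}N(0,1)$, which is the assertion of the lemma.

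\textbf{The main obstacle} is Step 2: the bare central limit theorem does not let one replace a deterministic index by the random index $k_n(x)$, so one needs genuine control of the small-window oscillations of the partial-sum process, which is exactly where the $\psi$-mixing property of $\{a_n\}$ is used (via a maximal inequality). An alternative, if one is prepared to invoke it, is the almost sure invariance principle for $\log q_n-bn$ that underlies Theorem \ref{q law}: it reduces Step 2 to an elementary bound on Brownian increments over a time interval of length $o(n)$, again using $|k_n(x)-an|=o(n)$ a.s.
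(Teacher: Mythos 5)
Your proposal is correct and follows essentially the same route as the paper: represent $\log q_m - bm$ as a centred Birkhoff sum via Proposition \ref{jie}, compare the random index $k_n$ with the deterministic index $\lfloor an\rfloor$ using (\ref{theorem eq}), control the oscillation over the resulting window by stationarity under $\nu$ plus a maximal inequality of weak type $\mathrm{P}(\max_{i\le n}|S_i|\ge\lambda\sqrt n)\lesssim K/\lambda^2$ (the paper invokes Faivre's equation (22) for exactly this), and finish with Slutsky. The only cosmetic difference is that you phrase the maximal bound through $\mathrm{E}[\max_{\ell\le L}\tilde S_\ell^{\,2}]\le CL$ rather than the weak-type form the paper cites; both rest on $\sup_n \mathrm{E}(S_n^2)/n<\infty$ for the $\psi$-mixing sequence.
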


\begin{proof}
For any irrational $x \in [0,1)$ and $n \geq 1$, denote
\[
A_n(x) = \log q_n(x) - bn\ \ \ \  \text{and}\ \ \ \ S_n(x) = Y_1(x) + \cdots + Y_n(x),
\]
where $T^0x =x$ and $Y_k(x) = -\log T^{k-1}x -b$ for all $1 \leq k \leq n$. By Proposition \ref{jie} and Theorem \ref{central limit theorem for qn}, we deduce that
\begin{equation}\label{S-n clt}
\frac{S_n}{\sigma_1\sqrt{n}} \Longrightarrow N(0,1),
\end{equation}
where $X_n  \Longrightarrow N(\mu, \sigma)$ denotes that $X_n \stackrel{d}{\longrightarrow} X$ and $X$ is a normal distribution random variable with mean $\mu$ and variance $\sigma^2$.

Notice that
\[
\frac{A_{k_n}}{\sigma_1\sqrt{k_n}} = \frac{A_{k_n}}{\sqrt{n}}\cdot\frac{\sqrt{n}}{\sigma_1\sqrt{k_n}}
\]
and $\lim\limits_{n \to \infty} k_n(x)/n = a$ for $\mathrm{P}$-almost all $x \in [0,1)$, to prove Lemma \ref{central limit theorem zilie}, it suffices to show that
\[
\frac{A_{k_n}}{\sqrt{n}} \Longrightarrow N(0,a\sigma_1^2).
\]
By Proposition \ref{jie}, it is equivalent to prove that
\begin{equation}\label{equivalent}
\frac{S_{k_n}}{\sqrt{n}} \Longrightarrow N(0,a\sigma_1^2).
\end{equation}

Let $t_n = \lfloor an\rfloor+1$. In view of (\ref{theorem eq}), we have that $\lim\limits_{n \to \infty} k_n(x)/t_n = 1$ for $\mathrm{P}$-almost all $x \in [0,1)$. By (\ref{S-n clt}), the assertion (\ref{equivalent}) holds if we can show that
\begin{equation}\label{ruoshoulian}
\frac{S_{k_n} - S_{t_n}}{\sqrt{n}} \stackrel{p}{\longrightarrow} 0.
\end{equation}
To do this, we need to use the equation (22) of Faivre \cite{lesFai98}, which states that for any $\lambda >0$,
\begin{equation}\label{faivre theorem1}
\limsup_{n \to \infty} \mathrm{P}\left(\max_{1 \leq i \leq n} |S_i| \geq \lambda \sqrt{n}\right) \leq \frac{16K}{\lambda^2},
\end{equation}
where $K = \sup_{n \geq 1}\frac{\mathrm{E}(S_n^2)}{n} < \infty$ is an absolute constant.

Now we use (\ref{faivre theorem1}) to prove the assertion (\ref{ruoshoulian}). For any $\varepsilon >0$ and any $0 < \delta <1$,
\begin{align}\label{faivre 1}
\mathrm{P}\left(|S_{k_n} - S_{t_n}|\geq \varepsilon \sqrt{n}\right) &\leq \mathrm{P}\left(|S_{k_n} - S_{t_n}| \geq \varepsilon \sqrt{n}, \left|\frac{k_n}{t_n}-1\right| \leq \delta\right) + \mathrm{P}\left(\left|\frac{k_n}{t_n}-1\right| > \delta\right) \notag \\
&\leq \mathrm{P}\left(\max_{(1-\delta)t_n \leq i \leq (1+\delta)t_n} |S_i - S_{t_n}| \geq \varepsilon \sqrt{n}\right) + \mathrm{P}\left(\left|\frac{k_n}{t_n}-1\right| > \delta\right).
\end{align}
Note that $\lim\limits_{n \to \infty} k_n(x)/t_n = 1$ for $\mathrm{P}$-almost all $x \in [0,1)$, in particular, $\lim\limits_{n \to \infty} k_n(x)/t_n = 1$ in probability. Then the probability $\mathrm{P}(|k_n/t_n -1| > \delta) \to 0$ as $n \to \infty$. Since the stochastic process $\{Y_n, n \geq 1\}$ is stationary w.r.t. the Gauss measure $\nu$, we obtain that
\begin{equation*}
\nu\left(\max_{(1-\delta)t_n \leq i \leq (1+\delta)t_n} |S_i - S_{t_n}| \geq \varepsilon \sqrt{n}\right) \leq
2\nu\left(\max_{1 \leq i \leq \lfloor\delta t_n\rfloor} |S_i| \geq \varepsilon \sqrt{n}\right).
\end{equation*}
Hence the inequalities (\ref{eqivalent}) imply that
\begin{equation}\label{faivre 2}
\mathrm{P}\left(\max_{(1-\delta)t_n \leq i \leq (1+\delta)t_n} |S_i - S_{t_n}| \geq \varepsilon \sqrt{n}\right) \leq
4\mathrm{P}\left(\max_{1 \leq i \leq \lfloor\delta t_n\rfloor} |S_i| \geq \varepsilon \sqrt{n}\right).
\end{equation}

For sufficient large $n$ such that $\sqrt{n} \geq \frac{\sqrt{\lfloor\delta t_n\rfloor}}{2\sqrt{a\delta}}$, we have that
\begin{equation}\label{faivre 3}
\mathrm{P}\left(\max_{1 \leq i \leq \lfloor\delta t_n\rfloor} |S_i| \geq \varepsilon \sqrt{n}\right) \leq \mathrm{P}\left(\max_{1 \leq i \leq \lfloor\delta t_n\rfloor} |S_i| \geq \frac{\varepsilon}{2\sqrt{a\delta}} \sqrt{\lfloor\delta t_n\rfloor}\right).
\end{equation}
Since $\{\lfloor\delta t_n\rfloor\}$ is a subsequence of \{n\}, applying (\ref{faivre theorem1}) with $\lambda = \frac{\varepsilon}{2\sqrt{a\delta}}$, we deduce
\begin{equation}\label{faivre 4}
\limsup_{n \to \infty} \mathrm{P}\left(\max_{1 \leq i \leq \lfloor\delta t_n\rfloor} |S_i| \geq \frac{\varepsilon}{2\sqrt{a\delta}} \sqrt{\lfloor\delta t_n\rfloor}\right) \leq \frac{64Ka\delta}{\varepsilon^2}.
\end{equation}

Combining (\ref{faivre 1}), (\ref{faivre 2}), (\ref{faivre 3}) and (\ref{faivre 4}), we obtain that
\begin{equation*}
\limsup_{n \to \infty} \mathrm{P}\left(|S_{k_n} - S_{t_n}|\geq \varepsilon \sqrt{n}\right) \leq \frac{256Ka\delta}{\varepsilon^2},
\end{equation*}
which implies
\begin{equation*}
\lim_{n \to \infty} \mathrm{P}\left(|S_{k_n} - S_{t_n}|\geq \varepsilon \sqrt{n}\right) =0,
\end{equation*}
by the arbitrariness of $0<\delta<1$.
Therefore, (\ref{ruoshoulian}) holds.
\end{proof}

\begin{lemma}\label{a-n shang}
Let $m \in \mathbb{N}$. Then for $\mathrm{P}$-almost all $x \in [0,1)$,
\begin{equation}\label{a-n jixian}
\lim_{n \to \infty}\frac{\log a_{k_n(x)+m}(x)}{\sqrt{n}} =0.
\end{equation}
\end{lemma}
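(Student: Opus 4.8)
The plan is to deduce \eqref{a-n jixian} from the elementary fact that, for $\mathrm{P}$-almost every $x$, the partial quotients grow slowly along the \emph{deterministic} index, namely $\log a_j(x) = o(\sqrt{j})$ as $j \to \infty$, and then to feed in the random index $j = k_n(x)+m$, which by \eqref{theorem eq} is of order $n$.

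First I would prove the deterministic-index estimate. Since $a_j(x) = \lfloor 1/T^{j-1}x \rfloor \leq 1/T^{j-1}x$ for irrational $x$, and $T$ preserves the Gauss measure $\nu$, for every real $t \geq 1$ we have
\[
\nu\{x : a_j(x) \geq t\} \leq \nu\{x : T^{j-1}x \leq 1/t\} = \nu\bigl([0,1/t]\bigr) = \frac{\log(1 + 1/t)}{\log 2} \leq \frac{1}{t\log 2},
\]
whence $\mathrm{P}\{x : a_j(x) \geq t\} \leq 2/t$ by \eqref{eqivalent}. Fixing $\ell \in \mathbb{N}$ and choosing $t = \exp(\sqrt{j}/\ell)$ gives
\[
\sum_{j \geq 1} \mathrm{P}\Bigl\{x : \log a_j(x) \geq \tfrac{1}{\ell}\sqrt{j}\Bigr\} \leq 2\sum_{j \geq 1} \exp\!\bigl(-\sqrt{j}/\ell\bigr) < \infty,
\]
so by the Borel--Cantelli lemma there is a $\mathrm{P}$-null set outside which $\log a_j(x) < \tfrac{1}{\ell}\sqrt{j}$ for all large $j$. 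Intersecting over $\ell \in \mathbb{N}$, we obtain that for $\mathrm{P}$-almost every $x$,
\begin{equation}\label{an-slow}
\lim_{j \to \infty}\frac{\log a_j(x)}{\sqrt{j}} = 0
\end{equation}
(the Borel--Bernstein theorem even gives $\log a_j(x) = O(\log j)$ a.e., but \eqref{an-slow} suffices). Alternatively, \eqref{an-slow} can be read off from Proposition \ref{jie} together with the same Borel--Cantelli argument applied to $-\log T^{j-1}x$.

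Next I would combine \eqref{an-slow} with \eqref{theorem eq}. For $\mathrm{P}$-almost every $x$ the limit \eqref{theorem eq} holds, so $k_n(x) \to \infty$ and there is $N(x)$ with $1 \leq k_n(x) + m \leq (a+1)n$ for all $n \geq N(x)$. Take any such $x$ lying also outside the null set of \eqref{an-slow}. Given $\varepsilon > 0$, pick $\ell \in \mathbb{N}$ with $\sqrt{a+1} < \varepsilon \ell$; since $k_n(x)+m \to \infty$, \eqref{an-slow} yields, for all large $n$,
\[
\log a_{k_n(x)+m}(x) < \frac{1}{\ell}\sqrt{k_n(x)+m} \leq \frac{1}{\ell}\sqrt{(a+1)n} < \varepsilon\sqrt{n}.
\]
Dividing by $\sqrt{n}$, letting $n \to \infty$ and then $\varepsilon \to 0$ gives \eqref{a-n jixian}.

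The only point that needs care --- and the main, if mild, obstacle --- is this last substitution: \eqref{an-slow} controls all large \emph{deterministic} indices $j$, whereas we apply it at the random index $k_n(x)+m$. This is legitimate because \eqref{an-slow} holds simultaneously for all large $j$ and because $k_n(x)+m$ both diverges and stays below a fixed multiple of $n$; the relevant exceptional set is just the union of the null sets coming from \eqref{an-slow} and from \eqref{theorem eq}.
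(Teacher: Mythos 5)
Your proof is correct and follows essentially the same route as the paper: both establish that $\log a_j(x)=o(\sqrt{j})$ along deterministic indices via the summability of $\sum_j e^{-c\sqrt{j}}$ and then transfer to the random index $k_n(x)+m$ using (\ref{theorem eq}). The only cosmetic differences are that you prove the convergence half of the Borel--Bernstein theorem directly by a first-moment Borel--Cantelli estimate instead of citing it, and you argue directly rather than by contradiction.
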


\begin{proof}
Let $B$ be the set of irrational numbers such that the limit in (\ref{a-n jixian}) does not converge to 0. For any $x \in B$, we have that
\begin{align*}
\limsup_{n \to \infty}\frac{\log a_{k_n(x)+m}(x)}{\sqrt{n}} >0.
\end{align*}
Note that (\ref{theorem eq}) holds for $\mathrm{P}$-almost all $x \in [0,1)$. Thus we can assume that $k_n(x)/n \to a$ as $n \to \infty$ since (\ref{a-n jixian}) is a kind of almost all result. Therefore, we obtain
\[
\limsup_{n \to \infty}\frac{\log a_{k_n(x)+m}(x)}{\sqrt{k_n(x)+m}} >0.
\]
In other words, there exists $\varepsilon_0 >0$ such that
\begin{equation}\label{infinite}
a_{k_n(x)+m}(x) \geq e^{\varepsilon_0 \sqrt{k_n(x)+m}} \ \ \text{holds for infinite many $n$}.
\end{equation}
Hence $a_n(x) \geq e^{\varepsilon_0 \sqrt{n}}$ are satisfied for infinite many $n$ since $k_n(x) \to \infty$ as $n \to \infty$. However, from the series $\sum_{n \geq 1} e^{-\varepsilon_0 \sqrt{n}} < + \infty$ and the Borel-Bernstein 0-1 law (see \cite[Theorem 30]{lesKhi64}) for $\{a_n, n\geq 1\}$, we know that for $\mathrm{P}$-almost all $y \in [0,1)$, the inequalities $a_n(y) \geq e^{\varepsilon_0 \sqrt{n}}$ are satisfied for only finitely many $n$. Thus, the set of points satisfying (\ref{infinite}) is zero of Lebesgue measure. Hence that $\mathrm{P}(B) = 0$.
\end{proof}

\begin{lemma}\label{l-n genhao}
Let $\beta >1$. Then for $\mathrm{P}$-almost all $x \in [0,1)$,
\begin{align*}
\lim_{n \to \infty} \frac{l_n(x)}{\sqrt{n}} =0.
\end{align*}
\end{lemma}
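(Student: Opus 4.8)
The plan is to run a Borel--Cantelli argument entirely parallel to the proof of Lemma \ref{a-n shang}, the only new ingredient being a uniform (in $n$) upper estimate for $\mathrm{P}\{x : l_n(x) \geq k\}$.

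First I would observe that, by the definition (\ref{l-n}) of $l_n$, for any $k \geq 1$ the event $\{x \in [0,1) : l_n(x) \geq k\}$ is exactly $\{x : \varepsilon_{n+1}(x) = \cdots = \varepsilon_{n+k}(x) = 0\}$, and this set is the disjoint union of the cylinders $J(\varepsilon_1, \ldots, \varepsilon_n, 0, \ldots, 0)$ with $k$ trailing zeros, taken over those $(\varepsilon_1, \ldots, \varepsilon_n) \in \Sigma_\beta^n$ for which the concatenation is admissible. Each such cylinder is a cylinder of order $n+k$, hence has Lebesgue measure at most $\beta^{-(n+k)}$ by the universal upper bound recalled in Section 2, and by Proposition \ref{Renyi} there are at most $\sharp \Sigma_\beta^n \leq \beta^{n+1}/(\beta-1)$ of them. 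Therefore
\[
\mathrm{P}\{x : l_n(x) \geq k\} \leq \frac{\beta^{n+1}}{\beta-1}\cdot \frac{1}{\beta^{n+k}} = \frac{\beta}{\beta-1}\,\beta^{-k},
\]
a bound that does not depend on $n$. This is precisely the point at which the difficulty stressed in the introduction --- the lack of a nontrivial lower bound for $|J(\varepsilon_1,\ldots,\varepsilon_n)|$ --- causes no trouble, since only upper bounds on cylinder lengths are used here.

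Next, fix $\varepsilon > 0$ and apply the previous estimate with $k = \lceil \varepsilon \sqrt{n}\rceil$ to get
\[
\mathrm{P}\{x : l_n(x) \geq \varepsilon\sqrt{n}\} \leq \frac{\beta}{\beta-1}\,\beta^{-\varepsilon\sqrt{n}} = \frac{\beta}{\beta-1}\,e^{-\varepsilon\sqrt{n}\log\beta}.
\]
Since $\sum_{n \geq 1} e^{-\varepsilon\sqrt{n}\log\beta} < \infty$ (the same type of series as in Lemma \ref{a-n shang}), the Borel--Cantelli lemma shows that for $\mathrm{P}$-almost every $x$ one has $l_n(x) < \varepsilon\sqrt{n}$ for all sufficiently large $n$, i.e. $\limsup_{n\to\infty} l_n(x)/\sqrt{n} \leq \varepsilon$. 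Taking $\varepsilon = 1/j$ and intersecting the resulting full-measure sets over $j \in \mathbb{N}$, and using $l_n(x) \geq 0$, we conclude $\lim_{n\to\infty} l_n(x)/\sqrt{n} = 0$ for $\mathrm{P}$-almost all $x$.

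I do not expect any serious obstacle: once the counting bound above is in hand the argument is routine. The only point requiring a little care is ensuring that the estimate for $\mathrm{P}\{x : l_n(x)\ge k\}$ is genuinely uniform in $n$ --- which it is, thanks to Proposition \ref{Renyi} together with the $\beta^{-n}$ upper bound on lengths of cylinders --- so that a single application of Borel--Cantelli along $n$ suffices for each fixed $\varepsilon$.
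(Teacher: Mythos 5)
Your proposal is correct and follows essentially the same route as the paper: bound $\mathrm{P}\{l_n \geq \varepsilon\sqrt{n}\}$ by counting the at most $\beta^{n+1}/(\beta-1)$ cylinders of order $n+\lfloor\varepsilon\sqrt{n}\rfloor$ ending in a block of zeros, each of length at most $\beta^{-(n+\lfloor\varepsilon\sqrt{n}\rfloor)}$, then apply Borel--Cantelli and intersect over $\varepsilon = 1/j$. The only (immaterial) difference is your choice of $\lceil\varepsilon\sqrt{n}\rceil$ versus the paper's $\lfloor\varepsilon\sqrt{n}\rfloor$, which changes the constant but not the summability.
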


\begin{proof}
By the definition of $l_n(x)$ in (\ref{l-n}), it is clear that $\liminf\limits_{n \to \infty} l_n(x)/\sqrt{n} \geq 0$. To prove
$\limsup\limits_{n \to \infty} l_n(x)/\sqrt{n} \leq 0$ for $\mathrm{P}$-almost all $x \in [0,1)$, we only need to show for any $\varepsilon > 0$,
\begin{equation}\label{borel}
\sum_{n=1}^{\infty} \mathrm{P}\left\{x \in [0,1): l_n(x) \geq \varepsilon\sqrt{n}\right\} < + \infty,
\end{equation}
since by the Borel-Cantelli lemma (see \cite[Theorem 2.18.1]{lesGut05}), the formula (\ref{borel}) implies that
\begin{equation}\label{cite}
\mathrm{P} \left\{x \in [0,1): \limsup_{n \to \infty} \frac{l_n(x)}{\sqrt{n}} \leq \varepsilon\right\} =1.
\end{equation}
We denote by $A= \bigcap\limits_{k=1}^\infty \left\{x \in [0,1): \limsup\limits_{n \to \infty} l_n(x)/\sqrt{n} \leq  1/k\right\}$. Then for any $k \geq 1$, applying (\ref{cite}) with $\varepsilon = 1/k$, we have 
$\mathrm{P} \left\{x \in [0,1): \limsup\limits_{n \to \infty} l_n(x)/\sqrt{n} \leq  1/k \right\} =1$ and therefore $\mathrm{P}(A) =1$. For any $x \in A$, we obtain that $\limsup\limits_{n \to \infty} l_n(x)/\sqrt{n} \leq  1/k$ holds for all $k \geq 1$. Thus, $\limsup\limits_{n \to \infty} l_n(x)/\sqrt{n} \leq 0$ holds for $\mathrm{P}$-almost all $x \in [0,1)$.


Now we prove (\ref{borel}). In fact, the set $A_n(\varepsilon):= \{x \in [0,1): l_n(x) \geq \varepsilon\sqrt{n}\}$ is a subset of the union of the cylinders of order  $(n+s_n)$ like $J(\varepsilon_1,\cdots,\varepsilon_n, \underbrace{0,\cdots,0}_{s_n})$ with $s_n = \lfloor\varepsilon\sqrt{n}\rfloor$, where $(\varepsilon_1,\cdots,\varepsilon_n) \in \Sigma_{\beta}^n$. That is,
\[
A_n(\varepsilon) \subseteq \bigcup_{(\varepsilon_1,\cdots,\varepsilon_n) \in \Sigma_{\beta}^n} J(\varepsilon_1,\cdots,\varepsilon_n, \underbrace{0,\cdots,0}_{s_n}).
\]
Since $|J(\varepsilon_1,\cdots,\varepsilon_n, \underbrace{0,\cdots,0}_{s_n})| \leq 1/\beta^{n+s_n}$ for any $(\varepsilon_1,\cdots,\varepsilon_n) \in \Sigma_{\beta}^n$, Proposition \ref{Renyi} implies that
\[
\mathrm{P}(A_n(\varepsilon)) \leq \sum_{(\varepsilon_1,\cdots,\varepsilon_n) \in \Sigma_{\beta}^n} |J(\varepsilon_1,\cdots,\varepsilon_n, \underbrace{0,\cdots,0}_{s_n})| \leq \frac{\beta^{n+1}}{\beta - 1} \cdot \frac{1}{\beta^{n+s_n}} \leq \frac{\beta^2}{\beta - 1} \cdot \frac{1}{\beta^{\varepsilon\sqrt{n}}}.
\]
Notice that $\sum_{n \geq 1}\beta^{-\varepsilon\sqrt{n}} < + \infty$, so we get the desired result.
\end{proof}

\begin{lemma}\label{q-n jixian}
Let $\beta >1$. Then for $\mathrm{P}$-almost all $x \in [0,1)$,
\begin{align*}
\lim_{n \to \infty} \frac{\log q_{k_n(x)}(x)-abn}{\sqrt{n}} =0.
\end{align*}
\end{lemma}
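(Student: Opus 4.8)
The plan is to sandwich $\log q_{k_n(x)}(x)$ between two quantities of the form $abn$ plus an error term which, after division by $\sqrt{n}$, tends to $0$ almost surely. The two cylinder-length estimates already available, namely Propositions \ref{zhongyao1} and \ref{fundamental 2}, are exactly what is needed for the two-sided bound, and the error terms will be controlled by the preceding Lemmas \ref{a-n shang} and \ref{l-n genhao}. Throughout one uses the identity $ab = \tfrac12\log\beta$, which follows immediately from $a = \frac{6\log2\log\beta}{\pi^2}$ and $b = \frac{\pi^2}{12\log2}$.

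First I would combine the two propositions. Write $J_n = J(\varepsilon_1(x),\dots,\varepsilon_n(x))$. Proposition \ref{fundamental 2} gives $|J_n| \le q_{k_n(x)}^{-2}(x)$, while the left inequality of Proposition \ref{zhongyao1} gives $|J_n| \ge \beta^{-(n+l_n(x)+1)}$; together these yield $q_{k_n(x)}^2(x) \le \beta^{\,n+l_n(x)+1}$, i.e.
\[
\log q_{k_n(x)}(x) \le abn + \tfrac12\bigl(l_n(x)+1\bigr)\log\beta .
\]
In the other direction, the left inequality of Proposition \ref{fundamental 2} together with the right inequality of Proposition \ref{zhongyao1} give $\beta^{-n} \ge |J_n| \ge \tfrac16 q_{k_n(x)+3}^{-2}(x)$, hence $\beta^n \le 6\,q_{k_n(x)+3}^2(x)$, i.e.
\[
\log q_{k_n(x)+3}(x) \ge abn - \tfrac12\log 6 .
\]

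Next I would pass from $q_{k_n+3}$ back to $q_{k_n}$. The recursion in Proposition \ref{recursive} gives $q_{m+1}(x) = a_{m+1}(x)q_m(x) + q_{m-1}(x) \le \bigl(a_{m+1}(x)+1\bigr)q_m(x)$, so iterating three times,
\[
0 \le \log q_{k_n(x)+3}(x) - \log q_{k_n(x)}(x) \le \sum_{j=1}^{3}\log\bigl(a_{k_n(x)+j}(x)+1\bigr).
\]
Combining the three displays, for every irrational $x \in [0,1)$ and every $n \ge 1$,
\[
-\tfrac12\log 6 - \sum_{j=1}^{3}\log\bigl(a_{k_n(x)+j}(x)+1\bigr) \le \log q_{k_n(x)}(x) - abn \le \tfrac12\bigl(l_n(x)+1\bigr)\log\beta .
\]
Dividing by $\sqrt{n}$, the right-hand side tends to $0$ for $\mathrm{P}$-almost every $x$ by Lemma \ref{l-n genhao}, and the left-hand side tends to $0$ for $\mathrm{P}$-almost every $x$ by Lemma \ref{a-n shang} applied with $m = 1,2,3$ (using $\log(a+1) \le \log 2 + \log a$ for integers $a \ge 1$). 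A squeeze then yields the assertion.

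I do not expect a genuine obstacle here: every ingredient is in place. The only point requiring care is bookkeeping — one must intersect the finitely many full-measure sets (the one from Lemma \ref{a-n shang} for each $m\in\{1,2,3\}$, the one from Lemma \ref{l-n genhao}, and, inside those, the almost-sure statement $k_n(x)/n \to a$ from \eqref{theorem eq}) into a single set of full Lebesgue measure on which all the estimates above hold simultaneously.
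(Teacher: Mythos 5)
Your proposal is correct and follows essentially the same route as the paper's proof: the same pairing of Propositions \ref{zhongyao1} and \ref{fundamental 2} to get the two-sided bound (the paper phrases this as $\limsup W_n \le 0$ and $\liminf W_n' \ge 0$ for $W_n' = (\log q_{k_n+3}-abn)/\sqrt{n}$), and the same passage from $q_{k_n+3}$ back to $q_{k_n}$ via the recursion of Proposition \ref{recursive} combined with Lemma \ref{a-n shang} for $m=1,2,3$; your bound $q_{m+1}\le(a_{m+1}+1)q_m$ is just a minor variant of the paper's $q_{k_n+3}\le 8a_{k_n+3}a_{k_n+2}a_{k_n+1}q_{k_n}$. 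No gaps.
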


\begin{proof}
For any irrational $x \in [0,1)$ and $n \geq 1$, denote
\begin{equation*}
W_n(x)= \frac{\log q_{k_n(x)}(x)-abn}{\sqrt{n}} \ \ \text{and}\ \  W_n^{\prime}(x) =\frac{\log q_{k_n(x)+3}(x)-abn}{\sqrt{n}}.
\end{equation*}
We will show that for $\mathrm{P}$-almost all $x \in [0,1)$,
\begin{equation}\label{jixian shang}
\limsup_{n \to \infty} W_n(x) \leq 0 \ \ \text{and}\ \  \liminf_{n \to \infty} W_n^{\prime}(x) \geq 0.
\end{equation}

Propositions \ref{zhongyao1} and \ref{fundamental 2} show respectively that
\begin{equation}\label{budengshi1}
\frac{1}{\beta^{n +l_n(x) +1}}\leq |J(\varepsilon_1(x) , \cdots, \varepsilon_n(x))| \leq \frac{1}{\beta^n}
\end{equation}
and
\begin{equation}\label{budengshi2}
\frac{1}{6q_{k_n(x)+3}^2(x)} \leq |J(\varepsilon_1(x) , \cdots, \varepsilon_n(x))| \leq \frac{1}{q_{k_n(x)}^2(x)}.
\end{equation}
In view of the right inequality of (\ref{budengshi1}) and the left inequality of (\ref{budengshi2}), we deduce that
\begin{equation*}
\log q_{k_n(x)+3}(x) -\frac{\log \beta}{2}n \geq - \frac{\log6}{2}.
\end{equation*}
Hence $\liminf\limits_{n \to \infty} W_n^{\prime}(x) \geq 0$. Next we are ready to prove $\limsup\limits_{n \to \infty} W_n(x) \leq 0$ for $\mathrm{P}$-almost all $x \in [0,1)$. Combining the left inequality of (\ref{budengshi1}) and the right inequality of (\ref{budengshi2}), we obtain that
\begin{equation*}
\log q_{k_n(x)}(x) -\frac{\log \beta}{2}n \leq (l_n(x) +1)\frac{\log\beta}{2}.
\end{equation*}
Therefore, Lemma \ref{l-n genhao} implies that $\limsup\limits_{n \to \infty} W_n(x) \leq 0$ for $\mathrm{P}$-almost all $x \in [0,1)$.

By Proposition \ref{recursive}, the recursive formula of $q_n$ shows that
\begin{equation*}
q_{k_n(x)+3} \leq 8 a_{k_n(x)+3}a_{k_n(x)+2}a_{k_n(x)+1}q_{k_n(x)}.
\end{equation*}
So, we have that
\begin{align}\label{lem}
W_n(x) \geq W_n^{\prime}(x)- \frac{\log8}{\sqrt{n}} - \sum_{m=1}^3\frac{\log a_{k_n(x)+m}(x)}{\sqrt{n}}.
\end{align}
Applying $m = 1,2,3$ to Lemma \ref{a-n shang}, we obtain that for $\mathrm{P}$-almost all $x \in [0,1)$,
\begin{equation*}
\lim_{n \to \infty}\sum_{m=1}^3\frac{\log a_{k_n(x)+m}(x)}{\sqrt{n}} = 0.
\end{equation*}
By (\ref{lem}), we deduce that
\begin{equation}\label{jixian xia}
\liminf_{n \to \infty} W_n(x) \geq \liminf_{n \to \infty} W_n^{\prime}(x) - \lim_{n \to \infty} \frac{\log8}{\sqrt{n}} - \lim_{n \to \infty}\sum_{m=1}^3\frac{\log a_{k_n(x)+m}(x)}{\sqrt{n}} \geq 0.
\end{equation}

Combing the first inequality of (\ref{jixian shang}) and (\ref{jixian xia}), we complete the proof.
\end{proof}

Now we are going to prove Theorem \ref{central limit theorem}.

\begin{proof}[Proof of Theorem \ref{central limit theorem}]
For any irrational $x \in [0,1)$ and $n \geq 1$, let
\begin{align*}
X_n(x) = - \frac{\log q_{k_n(x)}(x)-bk_n(x)}{\sigma_1\sqrt{k_n(x)}},
\end{align*}
\begin{align*}
Y_n(x)= \frac{\sigma_1\sqrt{k_n(x)}}{b\sigma\sqrt{n}}\ \ \ \ \ \ \text{and}\ \ \ \ \ \  Z_n(x)= \frac{\log q_{k_n(x)}(x)-abn}{b\sigma\sqrt{n}}.
\end{align*}

Therefore,
\begin{equation*}
\frac{k_n(x)-an}{\sigma\sqrt{n}} = X_n(x)\cdot Y_n(x) + Z_n(x),
\end{equation*}
where $\sigma$ and $\sigma_1$ are related by the equation
\begin{equation}\label{sigma constant}
\sigma^2 =\frac{a}{b^2}\sigma_1^2 =\frac{864\log^3 2 \log\beta}{\pi^6}\sigma_1^2.
\end{equation}
By Lemma \ref{central limit theorem zilie}, the sequence $\{X_n, n\geq 1\}$ converges to the standard normal distribution in distribution. The equalities (\ref{theorem eq}) and (\ref{sigma constant}) guarantee that $Y_n(x) \to 1$ as $n \to \infty$ for $\mathrm{P}$-almost all $x \in [0,1)$. From Lemma \ref{q-n jixian}, we know that for $\mathrm{P}$-almost all $x \in [0,1)$, $Z_n(x) \to 0$ as $n \to \infty$. In particular, $\lim\limits_{n \to \infty} Z_n = 0$ in probability. Therefore, Theorem \ref{Slutsky Theorem} implies that the sequence $\{X_nY_n + Z_n, n\geq 1\}$ converges to the standard normal distribution in distribution. That is, for every $y \in \mathbb{R}$,
\[
 \lim_{n \to \infty} \mathrm{P} \left\{x \in [0,1): \frac{k_n(x)-\frac{6\log2\log\beta}{\pi^2}n}{\sigma\sqrt{n}} \leq y \right\} = \frac{1}{\sqrt{2\pi}} \int_{-\infty}^y e^{-\frac{t^2}{2}}dt.
\]
\end{proof}

\subsection{Proof of Theorem \ref{law of the iterated logarithm}}

By Lemmas \ref{l-n genhao} and \ref{q-n jixian}, we immediately obtain the following result.

\begin{lemma}\label{l-n chongduishu}
Let $\beta >1$. Then for $\mathrm{P}$-almost all $x \in [0,1)$,
\begin{align*}
\lim_{n \to \infty} \frac{l_n(x)}{\sqrt{n\log\log n}} =0\ \ \ \ \ \ \text{and}\ \ \ \ \ \ \lim_{n \to \infty} \frac{\log q_{k_n(x)}(x)-abn}{\sqrt{n\log\log n}} =0.
\end{align*}
\end{lemma}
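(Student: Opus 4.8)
The plan is to obtain this statement as an immediate consequence of Lemmas \ref{l-n genhao} and \ref{q-n jixian} together with the trivial fact that $\log\log n \to \infty$. First I would fix a Borel set $\Omega_0 \subseteq [0,1)$ with $\mathrm{P}(\Omega_0) = 1$ on which the conclusions of both Lemma \ref{l-n genhao} and Lemma \ref{q-n jixian} hold simultaneously; such a set exists because a countable (here, finite) intersection of full-measure sets has full measure. Thus for every $x \in \Omega_0$ we have $l_n(x)/\sqrt{n} \to 0$ and $(\log q_{k_n(x)}(x) - abn)/\sqrt{n} \to 0$ as $n \to \infty$.

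Next, for all $n \geq 3$ (so that $\log\log n$ is defined and positive) I would use the factorizations
\begin{equation*}
\frac{l_n(x)}{\sqrt{n\log\log n}} = \frac{l_n(x)}{\sqrt{n}}\cdot\frac{1}{\sqrt{\log\log n}}
\qquad\text{and}\qquad
\frac{\log q_{k_n(x)}(x)-abn}{\sqrt{n\log\log n}} = \frac{\log q_{k_n(x)}(x)-abn}{\sqrt{n}}\cdot\frac{1}{\sqrt{\log\log n}}.
\end{equation*}
On $\Omega_0$ the first factor in each product converges to $0$, while $1/\sqrt{\log\log n} \to 0$; hence each product tends to $0$. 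Equivalently, for $n$ large enough that $\log\log n \geq 1$ one has $\sqrt{n\log\log n} \geq \sqrt{n}$, so the two quantities in the statement are bounded in absolute value by $|l_n(x)|/\sqrt{n}$ and $|\log q_{k_n(x)}(x)-abn|/\sqrt{n}$ respectively, and the result follows by direct comparison. Since $\mathrm{P}(\Omega_0)=1$, both limits hold for $\mathrm{P}$-almost all $x \in [0,1)$.

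There is essentially no obstacle here: the only points requiring a word of care are that the two full-measure events coming from Lemmas \ref{l-n genhao} and \ref{q-n jixian} must be intersected, and that $\log\log n$ is only relevant asymptotically (its value for small $n$ plays no role in the limit). The statement is isolated as a separate lemma only because these two normalizations by $\sqrt{n\log\log n}$ are precisely the inputs needed in the proof of the law of the iterated logarithm in Theorem \ref{law of the iterated logarithm}.
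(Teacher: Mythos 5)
Your proof is correct and matches the paper exactly: the paper derives this lemma as an immediate consequence of Lemmas \ref{l-n genhao} and \ref{q-n jixian}, precisely because $\sqrt{n\log\log n}\geq\sqrt{n}$ for large $n$. The only detail you add explicitly --- intersecting the two full-measure sets --- is the standard step the paper leaves implicit.
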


\begin{lemma}\label{inequality 2}
Let $\beta >1$. Then for $\mathrm{P}$-almost all $x \in [0,1)$,
\begin{equation*}
\lim\limits_{n \to \infty}\frac{k_{n+1}(x)-k_n(x)}{\sqrt{n\log\log n}} = 0.
\end{equation*}
\end{lemma}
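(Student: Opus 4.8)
The plan is to bound the increment $k_{n+1}(x)-k_n(x)$ by a deterministic quantity controlled by $l_{n+1}(x)$, and then quote Lemma \ref{l-n chongduishu}. Since $k_{n+1}(x)-k_n(x)\ge 0$ by (\ref{increasing sequence}), it suffices to produce a good upper bound; and if $k_{n+1}(x)\le k_n(x)+3$ there is nothing to do, so I would assume $k_{n+1}(x)>k_n(x)+3$.

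First I would combine Propositions \ref{zhongyao1} and \ref{fundamental 2}. Using the right inequality of Proposition \ref{zhongyao1} at level $n$ together with the left inequality of Proposition \ref{fundamental 2} at level $n$ gives $q_{k_n(x)+3}^2(x)\ge \beta^n/6$, i.e. $2\log q_{k_n(x)+3}(x)\ge n\log\beta-\log 6$. Using the left inequality of Proposition \ref{zhongyao1} at level $n+1$ together with the right inequality of Proposition \ref{fundamental 2} at level $n+1$ gives $q_{k_{n+1}(x)}^2(x)\le \beta^{\,n+l_{n+1}(x)+2}$, i.e. $2\log q_{k_{n+1}(x)}(x)\le (n+l_{n+1}(x)+2)\log\beta$.

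Next I would apply Corollary \ref{cor} with the indices $k_n(x)+3<k_{n+1}(x)$, obtaining $q_{k_{n+1}(x)}(x)\ge 2^{(k_{n+1}(x)-k_n(x)-4)/2}\,q_{k_n(x)+3}(x)$. Taking logarithms and chaining this with the two bounds above yields $(k_{n+1}(x)-k_n(x)-4)\log 2\le (l_{n+1}(x)+2)\log\beta+\log 6$, hence $k_{n+1}(x)-k_n(x)\le c_1\, l_{n+1}(x)+c_2$ for constants $c_1,c_2$ depending only on $\beta$; since $l_{n+1}(x)\ge 0$, this inequality also trivially covers the case $k_{n+1}(x)\le k_n(x)+3$ after enlarging $c_2$.

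Finally I would divide by $\sqrt{n\log\log n}$, note that $\sqrt{(n+1)\log\log(n+1)}\sim\sqrt{n\log\log n}$, and invoke Lemma \ref{l-n chongduishu} to get $l_{n+1}(x)/\sqrt{n\log\log n}\to 0$ for $\mathrm{P}$-almost all $x$; together with $k_{n+1}(x)-k_n(x)\ge 0$ this forces the ratio to tend to $0$. The whole argument is a careful bookkeeping exercise; the only mildly delicate points are choosing indices so that Corollary \ref{cor} legitimately applies (which is why the case split at level $3$ is needed) and passing from the normalization $\sqrt{(n+1)\log\log(n+1)}$ to $\sqrt{n\log\log n}$. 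I do not expect a genuine obstacle here.
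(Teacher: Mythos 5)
Your proposal is correct and follows essentially the same route as the paper: combining Propositions \ref{zhongyao1} and \ref{fundamental 2} at levels $n$ and $n+1$, applying Corollary \ref{cor} to compare $q_{k_{n+1}(x)}(x)$ with $q_{k_n(x)+3}(x)$, deducing $k_{n+1}(x)-k_n(x)\le c_1 l_{n+1}(x)+c_2$, and concluding via Lemma \ref{l-n chongduishu}. The only cosmetic difference is that the paper chains the two estimates through the ratio of cylinder lengths in a single display, and your explicit case split ensuring $k_n(x)+3<k_{n+1}(x)$ before invoking Corollary \ref{cor} is a small extra care the paper leaves implicit.
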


\begin{proof}
By (\ref{increasing sequence}), we know that $\liminf\limits_{n\to \infty}(k_{n+1}(x)-k_n(x)) \geq 0$ for all irrational $x \in [0,1)$. So it suffices to prove that for $\mathrm{P}$-almost all $x \in [0,1)$,
\[
\limsup\limits_{n \to \infty}\frac{k_{n+1}(x)-k_n(x)}{\sqrt{n\log\log n}} \leq 0.
\]

For any irrational $x \in [0,1)$ and $n \geq 1$, Propositions \ref{zhongyao1} and \ref{fundamental 2} show respectively that
\begin{equation*}
\frac{1}{\beta^{n +l_n(x) +1}}\leq |J(\varepsilon_1(x) , \cdots, \varepsilon_n(x))| \leq \frac{1}{\beta^n}
\end{equation*}
and
\begin{equation*}
\frac{1}{6q_{k_n(x)+3}^2(x)} \leq |J(\varepsilon_1(x) , \cdots, \varepsilon_n(x))| \leq \frac{1}{q_{k_n(x)}^2(x)}.
\end{equation*}
Therefore, we have that
\begin{equation}\label{zhuji 2}
  \frac{q_{k_{n+1}(x)}^2(x)}{6q_{k_n(x)+3}^2(x)} \leq \frac{|J(\varepsilon_1(x) , \cdots, \varepsilon_n(x))|}{|J(\varepsilon_1(x) , \cdots, \varepsilon_{n+1}(x))|} \leq \beta^{l_{n+1}(x)+2}.
\end{equation}
On the other hand, Corollary \ref{cor} implies that
\[
q_{k_{n+1}(x)}(x) \geq 2 ^{\frac{k_{n+1}(x)-(k_n(x)+3)-1}{2}}q_{k_n(x)+3}(x).
\]
Hence, by (\ref{zhuji 2}), we obtain
\begin{equation*}
\frac{2^{k_{n+1}(x)-k_n(x)-4}}{6} \leq \frac{q_{k_{n+1}(x)}^2(x)}{6q_{k_n(x)+3}^2(x)} \leq \beta^{l_{n+1}(x)+2}.
\end{equation*}
That is,
\[
k_{n+1}(x)-k_n(x) \leq 4 + \frac{\log 6}{\log 2} +\frac{2\log \beta}{\log 2} + l_{n+1}(x)\frac{\log \beta}{\log 2}.
\]
In view of Lemma \ref{l-n chongduishu}, we deduce that for $\mathrm{P}$-almost all $x \in [0,1)$,
\begin{equation*}
\limsup\limits_{n \to \infty}\frac{k_{n+1}(x)-k_n(x)}{\sqrt{n\log\log n}} \leq 0.
\end{equation*}
Therefore, for $\mathrm{P}$-almost all $x \in [0,1)$,
\begin{equation*}\label{k-n jixian}
\lim\limits_{n \to \infty}\frac{k_{n+1}(x)-k_n(x)}{\sqrt{n\log\log n}} = 0.
\end{equation*}
\end{proof}

The following lemma is a key tool in the proof of Theorem \ref{law of the iterated logarithm}, which states that the limit in Theorem \ref{q law} can also be replaced by the limit over the subsequence $\{k_n(x)\}$.

\begin{lemma}\label{inequality 3}
Let $\beta >1$. Then for $\mathrm{P}$-almost all $x \in [0,1)$,
\begin{equation}\label{prove eq 1}
\limsup\limits_{n \to \infty} \frac{\log q_{k_n(x)}(x)- bk_n(x)}{\sigma_1\sqrt{2k_n(x)\log\log k_n(x)}} = 1
\end{equation}
and
\begin{equation}\label{prove eq 2}
\liminf\limits_{n \to \infty} \frac{\log q_{k_n(x)}(x)- bk_n(x)}{\sigma_1\sqrt{2k_n(x)\log\log k_n(x)}} = -1.
\end{equation}
\end{lemma}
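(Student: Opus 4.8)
The plan is to transfer the law of the iterated logarithm for $\{q_n, n\geq 1\}$ from Theorem \ref{q law} along the random subsequence $\{k_n(x), n\geq 1\}$, using only the monotonicity $q_{n-1}(x)\leq q_n(x)$, the gap estimate of Lemma \ref{inequality 2}, and the growth rate $k_n(x)/n\to a$ from (\ref{theorem eq}). For an irrational $x\in[0,1)$ and an integer $m\geq 3$ set
\[
R_m(x)=\frac{\log q_m(x)-bm}{\sigma_1\sqrt{2m\log\log m}}.
\]
By Theorem \ref{q law}, for $\mathrm{P}$-almost all $x$ one has $\limsup_{m\to\infty}R_m(x)=1$ and $\liminf_{m\to\infty}R_m(x)=-1$; fix such an $x$, which in addition satisfies (\ref{increasing sequence}), (\ref{theorem eq}) and the conclusion of Lemma \ref{inequality 2}. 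Since $k_n(x)\to\infty$, for every $\varepsilon>0$ we get $-1-\varepsilon<R_{k_n(x)}(x)<1+\varepsilon$ for all large $n$, and hence at once $\limsup_{n\to\infty}R_{k_n(x)}(x)\leq 1$ and $\liminf_{n\to\infty}R_{k_n(x)}(x)\geq -1$. So the whole task is to prove the two reverse inequalities.

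For the first reverse inequality I would pick $m_j\to\infty$ with $R_{m_j}(x)\to 1$ (possible since $\limsup_m R_m(x)=1$), and for each large $m$ put $n(m)=\min\{n\geq 1:k_n(x)\geq m\}$, so that $k_{n(m)-1}(x)<m\leq k_{n(m)}(x)$ and hence $0\leq\delta_m:=k_{n(m)}(x)-m<k_{n(m)}(x)-k_{n(m)-1}(x)$. Because (\ref{theorem eq}) gives $n(m)\sim m/a$, Lemma \ref{inequality 2} yields $k_{n(m)}(x)-k_{n(m)-1}(x)=o(\sqrt{m\log\log m})$, so that $\delta_m=o(\sqrt{m\log\log m})$ and $k_{n(m)}(x)/m\to 1$. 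Since $q$ is non-decreasing and $k_{n(m)}(x)\geq m$, one has $\log q_{k_{n(m)}(x)}(x)\geq\log q_m(x)$, whence
\[
\log q_{k_{n(m)}(x)}(x)-bk_{n(m)}(x)\ \geq\ \bigl(\log q_m(x)-bm\bigr)-b\delta_m.
\]
Putting $m=m_j$, using $\log q_{m_j}(x)-bm_j=(1+o(1))\sigma_1\sqrt{2m_j\log\log m_j}$ together with $b\delta_{m_j}=o(\sqrt{m_j\log\log m_j})$, and dividing by $\sigma_1\sqrt{2k_{n(m_j)}(x)\log\log k_{n(m_j)}(x)}=(1+o(1))\sigma_1\sqrt{2m_j\log\log m_j}$, we obtain $R_{k_{n(m_j)}(x)}(x)\geq 1+o(1)$; since $n(m_j)\to\infty$, this forces $\limsup_{n\to\infty}R_{k_n(x)}(x)\geq 1$.

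The second reverse inequality is symmetric: choose $m_j'\to\infty$ with $R_{m_j'}(x)\to -1$ and put $n(m)=\max\{n\geq 1:k_n(x)\leq m\}$, so $k_{n(m)}(x)\leq m<k_{n(m)+1}(x)$ and $0\leq m-k_{n(m)}(x)<k_{n(m)+1}(x)-k_{n(m)}(x)=o(\sqrt{m\log\log m})$ by the same reasoning, with $k_{n(m)}(x)/m\to 1$. Now the monotonicity of $q$ gives $\log q_{k_{n(m)}(x)}(x)\leq\log q_m(x)$, so
\[
\log q_{k_{n(m)}(x)}(x)-bk_{n(m)}(x)\ \leq\ \bigl(\log q_m(x)-bm\bigr)+b\bigl(m-k_{n(m)}(x)\bigr),
\]
and specializing to $m=m_j'$ and dividing by $\sigma_1\sqrt{2k_{n(m_j')}(x)\log\log k_{n(m_j')}(x)}$ yields $R_{k_{n(m_j')}(x)}(x)\leq -1+o(1)$, whence $\liminf_{n\to\infty}R_{k_n(x)}(x)\leq -1$. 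Combining the four inequalities proves (\ref{prove eq 1}) and (\ref{prove eq 2}).

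The only real content is the ``no skipping'' step, and I expect the point to get right is precisely that it need only work one-sidedly. For the $\limsup$ one moves from a near-maximal index $m$ of the $q$-law of the iterated logarithm \emph{up} to the nearest $k_n(x)\geq m$: monotonicity of $q$ then guarantees $\log q_m(x)$ cannot drop, so $\log q_{k_n(x)}(x)-bk_n(x)$ falls below $\log q_m(x)-bm$ by at most $b(k_n(x)-m)\leq b\cdot(\text{gap})=o(\sqrt{m\log\log m})$, the gap bound being Lemma \ref{inequality 2} rescaled through (\ref{theorem eq}); for the $\liminf$ one moves \emph{down} to the nearest $k_n(x)\leq m$ and the same monotonicity works in the opposite direction. (Note that one cannot expect $\log q_m(x)-bm$ to change by only $O(\text{gap})$ over a gap, since a single large partial quotient spoils that; the one-sided comparison is what makes it go through.) Beyond this, what is left is the routine $o(1)$ bookkeeping, in particular checking $k_{n(m)}(x)/m\to 1$ and hence $\sqrt{2k_{n(m)}(x)\log\log k_{n(m)}(x)}=(1+o(1))\sqrt{2m\log\log m}$; no probabilistic input other than Theorem \ref{q law} and the already-established lemmas is needed.
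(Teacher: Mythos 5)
Your proposal is correct and follows essentially the same route as the paper: sandwich each index $m$ between consecutive values $k_{n}(x)\leq m\leq k_{n+1}(x)$, use the monotonicity of $q_n(x)$ for a one-sided comparison, and control the resulting error $b\cdot(\text{gap})$ via Lemma \ref{inequality 2} rescaled through (\ref{theorem eq}). The only cosmetic difference is that you carry out the $\liminf$ case explicitly with the comparison in the opposite direction, where the paper simply says ``similarly.''
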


\begin{proof}
Let $B_1$ and $B_2$ be the exceptional sets that the limit (\ref{theorem eq}) and Lemma \ref{inequality 2} do not hold respectively. Let $A = [0,1)\backslash (\mathbb{Q} \cup B_1 \cup B_2$), where $\mathbb{Q}$ denotes the set of rational numbers. Then $\mathrm{P}(A)=1$. Note that for any irrational $x \in [0,1)$,
\[
0 \leq k_1(x) \leq k_2(x) \leq \cdots\ \ \text{and}\ \  \lim\limits_{n \to \infty}k_n(x) = \infty.
\]
So for any $x \in A$ and any $i \geq 1$, there exists $n \geq 1$ such that $k_n(x) \leq i \leq k_{n+1}(x)$.

Therefore,
\begin{align}\label{zuo1}
&\frac{\log q_i(x)- bi}{\sigma_1\sqrt{2i\log\log i}} \leq
\frac{\log q_{k_{n+1}(x)}(x)- bk_n(x)}{\sigma_1\sqrt{2k_n(x)\log\log k_n(x)}}.
\end{align}

In view of (\ref{theorem eq}) and Lemma \ref{inequality 2}, we deduce that
\begin{equation}\label{lemma inequality 2}
\lim_{n \to \infty} \frac{b(k_{n+1}(x)-k_n(x))}{\sigma_1\sqrt{2k_{n+1}(x)\log\log k_{n+1}(x)}} =0.
\end{equation}
Since $k_n(x)/n \to a$ as $n \to \infty$, we have that

\begin{equation}\label{lemma inequality 12}
\lim_{n \to \infty} \frac{\sqrt{k_{n+1}(x)\log\log k_{n+1}(x)}}{\sqrt{k_n(x)\log\log k_n(x)}}=1
\end{equation}

Note that
\begin{align*}
\frac{\log q_{k_{n+1}(x)}(x)- bk_n(x)}{\sigma_1\sqrt{2k_{n}(x)\log\log k_{n}(x)}}
= \frac{\log q_{k_{n+1}(x)}(x)- bk_{n+1}(x)}{\sigma_1\sqrt{2k_{n}(x)\log\log k_{n}(x)}} + \frac{b(k_{n+1}(x)-k_n(x))}{\sigma_1\sqrt{2k_{n}(x)\log\log k_{n}(x)}}
\end{align*}
and
\begin{align*}
\frac{\log q_{k_{n+1}(x)}(x)- bk_{n+1}(x)}{\sigma_1\sqrt{2k_n(x)\log\log k_n(x)}}
= \frac{\log q_{k_{n+1}(x)}(x)- bk_{n+1}(x)}{\sigma_1\sqrt{2k_{n+1}(x)\log\log k_{n+1}(x)}} \cdot \frac{\sqrt{k_{n+1}(x)\log\log k_{n+1}(x)}}{\sqrt{k_n(x)\log\log k_n(x)}},
\end{align*}
combining this with (\ref{zuo1}), (\ref{lemma inequality 2}) and (\ref{lemma inequality 12}), we obtain that
\[
\limsup\limits_{n \to \infty} \frac{\log q_{k_n(x)}(x)- bk_n(x)}{\sigma_1\sqrt{2k_n(x)\log\log k_n(x)}} \geq
\limsup\limits_{i \to \infty}\frac{\log q_i(x)- bi}{\sigma_1\sqrt{2i\log\log i}} = 1,
\]
where the last equality is from Theorem \ref{q law}. Since the sequence $\{k_n(x)\}$ is a subsequence of $\{n\}$, we actually show that
\[
\limsup\limits_{n \to \infty} \frac{\log q_{k_n(x)}(x)- bk_n(x)}{\sigma_1\sqrt{2k_n(x)\log\log k_n(x)}} = 1.
\]

Similarly, we have that
\[
\liminf\limits_{n \to \infty} \frac{\log q_{k_n(x)}(x)- bk_n(x)}{\sigma_1\sqrt{2k_n(x)\log\log k_n(x)}} = -1.
\]
\end{proof}

Now we are ready to prove Theorem \ref{law of the iterated logarithm}.

\begin{proof}[Proof of Theorem \ref{law of the iterated logarithm}]

For any irrational $x \in [0,1)$ and $n \geq 1$, let
\begin{align*}
X_n(x) = - \frac{\log q_{k_n(x)}(x)-bk_n(x)}{\sigma_1\sqrt{2k_n(x)\log\log k_n(x)}},
\end{align*}
\begin{align*}
Y_n(x)= \frac{\sigma_1\sqrt{2k_n(x)\log\log k_n(x)}}{b\sigma\sqrt{2n\log\log n}}\ \ \ \ \ \ \text{and}\ \ \ \ \ \  Z_n(x)= \frac{\log q_{k_n(x)}(x)-abn}{b\sigma\sqrt{2n\log\log n}}.
\end{align*}

Therefore,
\begin{equation}\label{chongduishu}
\frac{k_n(x)-an}{\sigma\sqrt{2n\log\log n}} = X_n(x)\cdot Y_n(x) + Z_n(x),
\end{equation}
where $\sigma_1$ and $\sigma$ are related by the equation (\ref{sigma constant}).

Let $B_1$, $B_2$ and $B_3$ be the exceptional sets that the limit (\ref{theorem eq}), Lemma \ref{l-n chongduishu} and Lemma \ref{inequality 3} do not hold respectively. Let $A = [0,1)\backslash (B_1 \cup B_2 \cup B_3)$, then $\mathrm{P}(A)=1$.
For any $x \in A$, the equalities (\ref{theorem eq}) and (\ref{sigma constant}) show that $\lim\limits_{n \to \infty} Y_n(x) = 1$. The second equality of Lemma \ref{l-n chongduishu} implies that $\lim\limits_{n \to \infty} Z_n(x) = 0$. By Lemma \ref{inequality 3}, we have that
\[
\limsup_{n \to \infty} X_n(x) = 1\ \ \ \ \  \text{and}\ \ \ \ \  \liminf_{n \to \infty} X_n(x) = -1.
\]
Combining this with (\ref{chongduishu}), we obtain that
\[
\limsup_{n \to \infty} \frac{k_n(x)-an}{\sigma\sqrt{2n\log\log n}} = 1\ \ \ \ \  \text{and}\ \ \ \ \  \liminf_{n \to \infty} \frac{k_n(x)-an}{\sigma\sqrt{2n\log\log n}} = -1.
\]

\end{proof}

{\bf Acknowledgement}
The work was supported by NSFC 11371148, 11201155 and Guangdong Natural Science Foundation 2014A030313230.


\begin{thebibliography}{10}

\bibitem{lesB.I08} L. Barreira and G. Iommi, {\it  Partial quotients of continued fractions and
    $\beta$-expansions}, Nonlinearity 21 (2008), no. 10, 2211--2219.

\bibitem{lesBla89} F. Blanchard, {\it $\beta$-expansions and symbolic dynamics}, Theoret. Comput. Sci. 65 (1989), no. 2, 131--141.


\bibitem{lesBra05} R. Bradley, {\it Basic properties of strong mixing conditions. A survey and some open questions}, Probab. Surv. 2 (2005), 107--144.



\bibitem{lesB.W14} Y. Bugeaud and B. Wang, {\it Distribution of full cylinders and the Diophantine properties of the orbits in $\beta$-expansions}, J. Fractal Geom. 1 (2014), no. 2, 221--241.



\bibitem{lesD.F01} K. Dajani and A. Fieldsteel, {\it Equipartition of interval partitions and an application to number theory}, Proc. Amer. Math. Soc. 129 (2001), no. 12, 3453--3460.



\bibitem{lesD.K02} K. Dajani and C. Kraaikamp, {\it Ergodic Theory of Numbers}, Mathematical Association of America, Washington, DC, 2002.




\bibitem{lesFai97} C. Faivre, {\it On decimal and continued fraction expansions of a real number}, Acta Arith. 82 (1997), no. 2, 119--128.

\bibitem{lesFai98} C. Faivre, {\it A central limit theorem related to decimal and continued fraction expansion}, Arch. Math. 70 (1998), no. 6, 455--463.

\bibitem{lesF.W12} A. Fan and B. Wang, {\it On the lengths of basic intervals in beta expansions},  Nonlinearity 25 (2012), no. 5, 1329--1343.



\bibitem{lesF.V98} P. Flajolet and B. Vall\'{e}e, {\it Continued fraction algorithms, functional operators, and structure constants}, Theoret. Comput. Sci. 194 (1998), 1--34.

\bibitem{lesF.S92} C. Frougny and B. Solomyak, {\it Finite beta-expansions}, Ergodic Theory Dynam. Systems 12 (1992), no. 4, 713--723.




\bibitem{lesGel59} A. Gel'fond, {\it A common property of number systems}, Izv. Akad. Nauk SSSR. Ser. Mat. 23 (1959), 809--814.


\bibitem{lesGut05} A. Gut, {\it Probability: A Graduate Course}, Springer, New York, 2005.



\bibitem{lesHT08} K. Hare and D. Tweedle, {\it Beta-expansions for infinite families of Pisot and Salem numbers}, J. Number Theory 128 (2008), no. 9, 2756--2765.


\bibitem{lesIbr61} I. Ibragimov, {\it A theorem from the metric theory of continued fractions}, Vestnik Leningrad. Univ. 16 (1961), no. 1, 13--24.

\bibitem{lesKhi64} Y. Khintchine, {\it Contidued Fractions}, The University of Chicago Press, Chicago-London, 1964.




\bibitem{lesLev29} P. L\'{e}vy, {\it Sur les lois de probabilit\'{e} dont d\'{e}pendent les quotients complets et incomplets d\'{u}ne fraction continue}, Bull. Soc. Math. 57 (1929), 178--194.


\bibitem{lesL.P.W.W14} B. Li, T. Persson, B. Wang and J. Wu, {\it Diophantine approximation of the orbit of 1 in the dynamical system of beta expansions}, Math. Z. 276 (2014), no. 3--4, 799--827.



\bibitem{lesL.W08} B. Li and J. Wu, {\it Beta-expansion and continued fraction expansion}, J. Math. Anal. Appl. 339 (2008), no. 2, 1322--1331.



\bibitem{lesLoc64} G. Lochs, {\it Vergleich der Genauigkeit von Dezimalbruch und Kettenbruch}, Abh. Math. Sem. Univ. Hamburg 27 (1964), 142--144.


\bibitem{lesMay90} D. Mayer, {\it On the thermodynamic formalism for the Gauss map}, Comm. Math. Phys. 130 (1990), 311--333.


\bibitem{lesMis70} G. Misevi\v{c}ius, {\it Estimate of the remainder term in the limit theorem for denominators of continued fractions}, Litovsk. Math. Sb. 10 (1970), 293--308.

\bibitem{lesMor94} T. Morita, {\it Local limit theorem and distribution of periodic orbits of Lasota-Yorke transformations with infinite Markov partition}, J. Math. Soc. Japan 46 (1994), no. 2, 309--343.

\bibitem{lesPar60} W. Parry, {\it On the $\beta$-expansions of real numbers}, Acta Math. Acad. Sci. Hungar. 11 (1960), 401--416.

\bibitem{lesP.S69} W. Philipp and P. Stackelberg, {\it Zwei Grenzwerts\"{a}tze f\"{u}r Kettenbr\"{u}che}, Math. Ann. 181 (1969), 152--154.


\bibitem{lesPhi70} W. Philipp, {\it Some metrical theorems in number theory II}, Duke Math. J. 37 (1970), 447--458.



\bibitem{lesRen57} A. R\'{e}nyi, {\it Representations for real numbers and their ergodic properties}, Acta Math. Acad. Sci. Hungar. 8 (1957), 477--493.


\bibitem{lesSch97} J. Schmeling, {\it Symbolic dynamics for $\beta$-shifts and self-normal numbers}, Ergodic Theory Dynam. Systems 17 (1997), no. 3, 675--694.


\bibitem{lesSch80} K. Schmidt, {\it On periodic expansions of Pisot numbers and Salem numbers}, Bull. London Math. Soc. 12 (1980), no. 4, 269--278.


\bibitem{lesWu06} J. Wu, {\it Continued fraction and decimal expansions of an irrational number}, Adv. Math. 206 (2006), no. 2, 684--694.


\bibitem{lesWu08} J. Wu, {\it An iterated logarithm law related to decimal and continued fraction expansions}, Monatsh. Math. 153 (2008), no. 1, 83--87.

\end{thebibliography}
\end{document}